\documentclass[11pt]{article}

\usepackage[british]{babel}
\usepackage[utf8]{inputenc}
\usepackage{amsmath,amsthm,amsfonts,amssymb}
\usepackage{multicol}
\usepackage[all]{xy}
\usepackage{enumerate}
\usepackage{wrapfig}
\usepackage{geometry}
\usepackage{hyperref}

\newtheorem{theorem}{Theorem}[section]
\newtheorem{proposition}[theorem]{Proposition}
\newtheorem{lemma}[theorem]{Lemma}
\newtheorem{corollary}[theorem]{Corollary}
\theoremstyle{definition}
\newtheorem{definition}[theorem]{Definition}
\newtheorem{remark}[theorem]{Remark}

\newcommand{\Coin}{\operatorname{Coin}}
\newcommand{\Fix}{\operatorname{Fix}}

\newcommand{\Hom}{\operatorname{Hom}}
\newcommand{\Id}{\operatorname{Id}}
\newcommand{\trace}{\operatorname{trace}}

\title{A Lefschetz type homomorphism for coincidence of several maps}
\author{PRYSCILLA DOS SANTOS FERREIRA SILVA\thanks{Departamento de Ci\^encias Exatas, Universidade Estadual de Santa Cruz, Rodovia Jorge Amado, Km 16, Bairro Salobrinho, CEP 45662-900, Ilh\'eus-BA, Brazil. e-mail: psfsilva@uesc.br}
\and
WESLEM LIBERATO SILVA\thanks{Departamento de Ci\^encias Exatas, Universidade Estadual de Santa Cruz, Rodovia Jorge Amado, Km 16, Bairro Salobrinho, CEP 45662-900, Ilh\'eus-BA, Brazil. e-mail: wlsilva@uesc.br}}
\date{}

\begin{document}
\maketitle

\begin{abstract}
Given $p$-maps $f_1,\cdots,f_p:X\to M,$ $p\geq 2,$ from an arbitrary topological space to an orientable closed connected $m$-manifold, in this paper we define a graded homomorphism $\Lambda_{f_1\cdots f_p}:H(X)\to H(M^{p-1})$ of degree $-m(p-1)$ called by Lefschetz homomorphism. If the Lefschetz homomorphism is nontrivial then there is a point $x\in X$ such that $f_1(x)=\cdots=f_p(x).$ The Lefschetz homomorphism $\Lambda_{f_1\cdots f_p}$ can be represented as a Knill-like trace.
\end{abstract}

\noindent\textbf{Key words:} Coincidence theory, Lefschetz homomorphism, Knill trace

\section{Introduction}

Let $f,g:X\to M$ be two maps from a topological space $X$ into an oriented compact $m$-manifold $M.$ In \cite{Saveliev} was developed a Lefschetz coincidence theory to study the set $\Coin(f,g)=\{x\in X\mid f(x)=g(x)\}.$ The author defined a graded homomorphism $\Lambda_{fg}:H(X)\to H(M)$ of degree $(-m)$ and proved that if $\Lambda_{fg}$ is nontrivial then there exist a point $x\in X$ such that $f(x)=g(x).$

A natural generalization of the coincidence problem is to study the coincidence problem for several maps, that is, given $f_1,f_2,\cdots,f_p:X\to M$ is there $x\in X$ such that $f_1(x)=\cdots=f_p(x)$ ? Several authors have been working on this problem, see for example \cite{BLM,Staecker,MSilva,MSpiez,MWong}.

In \cite{BLM} the authors defined a cohomology class $L(f_1,\cdots,f_p)$ called by Lefschetz class such that $L(f_1,\cdots,f_p)\neq 0$ then $\Coin(f_1,\cdots,f_p)\neq\emptyset.$ This theory was generalized in \cite{MSpiez} in the case where $M$ is not necessarily orientable.

In \cite{MWong} the obstruction theory was used to study when $(f_1,\cdots,f_p)$ can be deformed to $(f'_1,\cdots,f'_p)$ such that $\Coin(f'_1,\cdots,f'_p)=\emptyset.$ This theory was generalized for fiber bundles in \cite{MSilva}.

In this work we study $\Coin(f_1,\cdots,f_p)$ using a ``combinatorial'' way and we extend the theory developed in \cite{Saveliev} for $p$-maps $f_1,\cdots,f_p:X\to M,$ where $p\geq 2.$ We have defined a graded homomorphism $\Lambda_{f_1\cdots f_p}:H(X)\to H(M^{p-1}),$ called by Lefschetz homomorphism, and prove that if $\Lambda_{f_1\cdots f_p}$ is not trivial then the set $\Coin(f_1,\cdots,f_p)$ is not empty. We have proved in Theorem \ref{mainfactor} that
\[
\Lambda_{f_1\cdots f_p}
=\pm\eta'\circ(\Lambda_{f_1f_2}\otimes\cdots\otimes\Lambda_{f_1f_p})\circ\kappa_X\circ\Delta_X^{(p-1)},
\]
where $\Delta_X^{(p-1)}$ is the homological coproduct induced by the $(p-1)$-fold diagonal and $\kappa_X$ is the sign homomorphism defined in Section 4.

When $p=2,$ $X=M\times Y$ and $f_1$ or $f_2$ is the projection map $\pi:M\times Y\to M$ then was defined in \cite{Knill} a trace called by the Knill trace to study fixed points of parametrized maps. The Lefschetz homomorphism $\Lambda_{f_1\cdots f_p}$ can be represented as a Knill-like trace, see Remark \ref{knilllike}.

This work is organized into four sections besides this one. In Section 2 we present the definitions of the Knill trace for a parametrized map, the Lefschetz and the coincidence homomorphims. In Section 3 we define the coincidence homomorphism for $p$-maps and present a representation formula, Theorem \ref{representation}. In section 4 we define the Lefschetz homomorphism for $p$-maps and present a result relating that Lefschetz homomorphism for coincidence of two maps, see Theorem \ref{mainfactor}.

In section 5 we compute the Lefschetz homomorphism for some special cases.

\section{Preliminaries}

\subsection{Notations}

Given a $m$-manifold $M$ we will denote
\[
M^{\times}=(M\times M,M\times M\setminus\Delta_M),
\]
where $\Delta_M=\{(x,x)\mid x\in M\}$ and
\[
M^{\times p}=(M^{\times})^p=M^{\times}\times\cdots\times M^{\times},\quad (p\text{-times}).
\]

We will also denote by $\Delta_{M^p}=\{((z_1,\cdots,z_p),(z_1,\cdots,z_p))\mid z_i\in M\}$ the diagonal in $M^p\times M^p$ and
\[
M^{\times}_p=(M^p\times M^p,M^p\times M^p\setminus\Delta_{M^p}).
\]

We recall that the product of two pairs of topological spaces $(Z_1,B_1)$ and $(Z_2,B_2)$ is defined by
\[
(Z_1,B_1)\times(Z_2,B_2):=(Z_1\times Z_2,Z_1\times B_2\cup B_1\times Z_2).
\]
More generally given $(Z_i,B_i),$ $i=1,\cdots,k$ $k$-pairs of topological spaces we define
\[
\prod_{i=1}^k(Z_i,B_i):=\left(\prod_{i=1}^kZ_i,\,\,\bigcup_{i=1}^kY_i\right),
\]
where $Y_i=Z_1\times\cdots\times Z_{i-1}\times B_i\times Z_{i+1}\times\cdots\times Z_k.$

\begin{lemma}\label{cap-product}
Let $X_i$ be topological spaces, $u_i\in H^{r_i}(X_i,R)$ and $z_i\in H_{s_i}(X_i,R)$ for each $i=1,\cdots,n.$ Then
\[
(u_1\times u_2\times\cdots\times u_n)\smallfrown(z_1\times z_2\times\cdots\times z_n)
=(-1)^{A_n}(u_1\smallfrown z_1)\times\cdots\times(u_n\smallfrown z_n),
\]
where
\[
A_n=\sum_{j=2}^n r_j\left(\sum_{i=1}^{j-1}(s_i-r_i)\right).
\]
\end{lemma}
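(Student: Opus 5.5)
Induction on $n$, with the case $n=2$ as the base case; everything rests on one standard identity. For two spaces $X$, $Y$ with $u\in H^{r}(X)$, $v\in H^{r'}(Y)$, $a\in H_s(X)$, $b\in H_{s'}(Y)$, the cross and cap products satisfy
\[
(u\times v)\smallfrown(a\times b)=(-1)^{r'(s-r)}(u\smallfrown a)\times(v\smallfrown b).
\]
This is the classical formula, e.g.\ Spanier or Dold VII.12.17, with the sign convention matching the one fixed implicitly in the statement. I would verify it at the chain level. Write $u\times v=p_1^*u\smallsmile p_2^*v$ and $a\times b=EZ(a\otimes b)$. Then use the compatibility of the Alexander--Whitney diagonal with the Eilenberg--Zilber map: the cap product on a tensor product of chains is computed factorwise. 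The Koszul sign arises from moving the cochain $v$, of degree $r'$, past the chain $a\smallfrown u$, of degree $s-r$. For $n=2$ this gives exactly $A_2=r_2(s_1-r_1)$.

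For the inductive step, assume the formula holds for $n-1$ factors. Put $U=u_1\times\cdots\times u_{n-1}\in H^{R}(X_1\times\cdots\times X_{n-1})$ and $Z=z_1\times\cdots\times z_{n-1}\in H_{S}(X_1\times\cdots\times X_{n-1})$, where $R=\sum_{i<n}r_i$ and $S=\sum_{i<n}s_i$. By associativity of the cross product in both homology and cohomology, the left-hand side equals $(U\times u_n)\smallfrown(Z\times z_n)$. The two-factor case then gives
\[
(-1)^{r_n(S-R)}(U\smallfrown Z)\times(u_n\smallfrown z_n).
\]
The inductive hypothesis gives $U\smallfrown Z=(-1)^{A_{n-1}}(u_1\smallfrown z_1)\times\cdots\times(u_{n-1}\smallfrown z_{n-1})$. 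The total sign exponent is therefore
\[
A_{n-1}+r_n\sum_{i=1}^{n-1}(s_i-r_i)=A_n,
\]
which is exactly the recursion defining $A_n$. This closes the induction.

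The main obstacle is the base case $n=2$, specifically pinning down the sign convention. The exponent $r'(s-r)$ depends on how the cap product is defined (left or right action) and on how the cross product is defined. I would fix the convention used by Saveliev/Dold, consistent with the paper's later use, and check the sign on the chain level via Alexander--Whitney. If a relative version is needed later, for pairs as in $M^{\times}$, the same argument applies verbatim with relative cochains, since everything is natural.
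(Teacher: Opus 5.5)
Your proposal is correct and takes the same route as the paper: the paper cites the two-factor formula $(u\times v)\smallfrown(a\times b)=(-1)^{|v|(|a|-|u|)}(u\smallfrown a)\times(v\smallfrown b)$ from Vick (Chapter 5, Exercise 14) and says only that the general case follows by a simple induction. Your inductive step, which groups the first $n-1$ factors and checks that $A_n=A_{n-1}+r_n\sum_{i=1}^{n-1}(s_i-r_i)$, is exactly the induction the paper leaves implicit.
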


\begin{proof}
The case $n=2$ follows from \cite[Chapter 5, pg 142, exercise 14]{Vick}. The general case follows by a simple induction.
\end{proof}

\begin{corollary}\label{sign-cor}
If $r_i=r$ and $s_i=s$ for all $i=1,\cdots,n$ then
\[
A_n=\frac{n(n-1)r(s-r)}{2}
\]
\end{corollary}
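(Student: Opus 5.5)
Substituting $r_i=r$ and $s_i=s$ into the exponent from Lemma \ref{cap-product} gives
\[
A_n=\sum_{j=2}^n r\left(\sum_{i=1}^{j-1}(s-r)\right).
\]
The plan is to collapse the inner sum first and then evaluate the outer one as an arithmetic progression.

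The inner sum $\sum_{i=1}^{j-1}(s-r)$ has $j-1$ equal terms, so it equals $(j-1)(s-r)$. This gives
\[
A_n=r(s-r)\sum_{j=2}^n (j-1)=r(s-r)\sum_{k=1}^{n-1}k ,
\]
after the reindexing $k=j-1$. The closing step is Gauss's formula $\sum_{k=1}^{n-1}k=\frac{n(n-1)}{2}$, which yields $A_n=\frac{n(n-1)r(s-r)}{2}$ as claimed.

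There is essentially no obstacle here. The only point to check is the indexing: the outer sum runs from $j=2$, and the inner sum has exactly $j-1$ terms. This is what produces the factor $n(n-1)/2$ rather than $n(n+1)/2$.

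It may also be worth noting that $A_n$ only enters as the exponent of $-1$, so only its parity matters. The formula is exact as an integer identity anyway, since $n(n-1)$ is even.
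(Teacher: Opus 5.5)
Your proof is correct and is exactly the direct computation the paper leaves implicit (it states the corollary without proof). You substitute $r_i=r$, $s_i=s$ into the exponent of Lemma \ref{cap-product}, collapse the inner sum to $(j-1)(s-r)$, and use $\sum_{j=2}^n(j-1)=n(n-1)/2$.
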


If $\pi_1:M\times M\to M$ is the projection on the first coordinate then the Thom isomorphism $\varphi_M:H_k(M^{\times})\to H_{k-m}(M)$ is given by $\varphi_M(x)={\pi_1}_{\#}(\tau_M\smallfrown x),$ where $\tau_M\in H^m(M^{\times})$ is the Thom class of the fiber bundle pair $\xi_2(M)\equiv M^{\times}\stackrel{{\scriptstyle\pi_1}}{{\longrightarrow}}M.$

The $p$-times product of $\xi_2(M)$ we will produce the fiber bundle pair $\xi_p(M)\equiv M^{\times p}\stackrel{{\scriptstyle\pi}}{{\longrightarrow}}M^p$ where $\pi=\underbrace{\pi_1\times\cdots\times\pi_1}_{p\text{-times}}.$ The Thom class of $\xi_p(M)$ will be denoted by $\tau_{M^p}.$

The Thom class of the fiber bundle pair $M^{\times}_p\stackrel{{\scriptstyle\pi'}}{{\longrightarrow}}M^p$ will be denoted by $\tau_{M^{\times}_p},$ where $\pi'$ is the projection on the first coordinate.

\subsection{Traces}

The following theory of the Lefschetz class of a graded homomorphism presented in \cite[Section 1]{GNO} or \cite[Section 2]{Saveliev} will be useful in this work.

Let $E_{\ast}$ be a finite dimensional graded vector space over a field $\mathbb{F}.$ Consider the dual graded vector space $E^{\ast},$ where $E^j=\Hom_{\mathbb{F}}(E_j,\mathbb{F}).$ Let $C_{\ast}$ be another graded vector space over $\mathbb{F}$ and suppose that a cap product with values in $C_{\ast}$ is given, that is, a collection of homomorphism;
\[
E^r\otimes E_s\stackrel{{\scriptstyle\smallfrown}}{{\longrightarrow}}C_{s-r}.
\]
There is a natural isomorphism $\Theta:E^k\otimes E_{k+n}\to\Hom_{\mathbb{F}}(E_k,E_{k+n})$ given by
\[
\Theta(h\otimes y)(x)=(-1)^{|y||x|}\langle h,x\rangle y,
\]
where $|w|$ denotes the degree of $w,$ $\langle,\rangle:E^j\otimes E_j\to\mathbb{F}$ is the dual paring, $h\in E^k,$ $x\in E_k$ and $y\in E_{k+n}.$

\begin{remark}
Here there is a small difference in the definition of $\Theta$ compared to those given in \cite[Section 1]{GNO}. There was defined $\Theta(h\otimes y)(x)=\langle h,x\rangle y,$ that is, there is a difference in the signal. In this work, to present the definition of Lefschetz class homomorphism for the $p$-maps $f_1,\cdots,f_p$, we will use the definition above.
\end{remark}

\begin{definition}
The trace of a homomorphism $f\in\Hom_{\mathbb{F}}(E_k,E_{k+n})$ of degree $n$ is given by
\[
\trace(f)=\smallfrown\Theta^{-1}(f)\in C_n.
\]
\end{definition}

\begin{proposition}
\cite[Proposition 2]{GNO} If $\{a_j^k\mid j=1,\dots,\alpha_k\}$ is a basis for $E_k$ with corresponding dual basis $\{\bar a_j^k\mid j=1,\dots,\alpha_k\}$ of $E^k$ then
\[
\trace(f)=\sum_{j=1}^{\alpha_k}\bar a_j^k\smallfrown f(a_j^k).
\]
\end{proposition}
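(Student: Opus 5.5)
The plan is to compute $\Theta^{-1}(f)$ explicitly in the given basis and then apply the cap product; the trace formula then follows by linearity.

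First, since $E_k$ and $E_{k+n}$ are finite dimensional, write a general element of $E^k\otimes E_{k+n}$ as $\sum_j \bar a_j^k\otimes y_j$ with $y_j\in E_{k+n}$. This is possible because $\{\bar a_j^k\}$ is a basis of $E^k$.

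Next, set $\omega=\sum_{j}\bar a_j^k\otimes f(a_j^k)$ and check that $\Theta(\omega)=f$. For this, evaluate on a basis vector $a_i^k$:
\[
\Theta(\omega)(a_i^k)=\sum_j(-1)^{|f(a_j^k)||a_i^k|}\langle\bar a_j^k,a_i^k\rangle f(a_j^k)=(-1)^{(k+n)k}f(a_i^k).
\]

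Here lies the main obstacle: the sign introduced by the modified $\Theta$. The sign $(-1)^{k(k+n)}$ equals $(-1)^{kn}$, since $k^2\equiv k \pmod 2$. It is therefore not identically $1$, and so with the paper's modified convention $\Theta^{-1}(f)=(-1)^{k(k+n)}\sum_j\bar a_j^k\otimes f(a_j^k)$. I would handle this in one of two ways:
\begin{itemize}
\item either note that the proposition as cited is stated for the convention of \cite{GNO}, where $\Theta(h\otimes y)(x)=\langle h,x\rangle y$ and no sign appears, so the identity holds verbatim;
\item or, with the present convention, carry the factor $(-1)^{k(k+n)}$ through the computation.
\end{itemize}
I would state explicitly which convention is used. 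Under the \cite{GNO} convention, $\Theta(\omega)$ and $f$ agree on a basis, so $\Theta(\omega)=f$. Since $\Theta$ is an isomorphism, this gives $\Theta^{-1}(f)=\omega$.

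Finally, apply the cap product, which is bilinear:
\[
\trace(f)=\smallfrown\Bigl(\sum_j\bar a_j^k\otimes f(a_j^k)\Bigr)=\sum_j\bar a_j^k\smallfrown f(a_j^k)\in C_n.
\]
Under the paper's convention, the same formula holds up to the global sign $(-1)^{kn}$.
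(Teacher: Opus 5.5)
Your dual-basis computation is correct and is the standard argument behind \cite[Proposition 2]{GNO}; the paper gives no proof of its own and only cites that result.

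Your sign observation is also right and worth keeping. With the paper's modified $\Theta$ one gets $\Theta^{-1}(f)=(-1)^{k(k+n)}\sum_j\bar a_j^k\otimes f(a_j^k)$. So the displayed formula holds verbatim only under the convention of \cite{GNO}. That is also the only convention consistent with Proposition~\ref{knill-expression}: with the modified $\Theta$, the factors $(-1)^{k(k+n)}$ in $L_n(h)=\sum_k(-1)^{k(k+n)}\trace(h_k)$ would cancel.

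There is one slip. $(-1)^{k(k+n)}$ is not $(-1)^{kn}$: since $k^2\equiv k \pmod 2$, you get $k(k+n)\equiv k(n+1)\pmod 2$. For example, $k=1$, $n=0$ gives $-1$, whereas $(-1)^{kn}=1$. Your final sentence should therefore say that, under the paper's convention, the formula holds up to the sign $(-1)^{k(k+n)}=(-1)^{k(n+1)}$. In the classical case $n=0$ this sign is $(-1)^k$, so it genuinely matters.
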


\begin{definition}
Let $E_{\ast}$ be a finite dimensional graded vector space equipped with a cap product taking values in the graded vector space $C_{\ast}.$ If $h:E_{\ast}\to E_{\ast}$ is a homomorphism of degree $n$ then the Lefschetz class of $h,$ $L_n(h)\in C_n,$ is;
\[
L_n(h)=\sum_k(-1)^{k(k+n)}\trace(h_k).
\]
\end{definition}

From \cite[Proposition 1.2]{GNO} we obtain the following expression of the Lefschetz class $L_n(h)$ as a Knill-like trace;

\begin{proposition}\label{knill-expression}
If $h:E_{\ast}\to E_{\ast}$ is a homomorphism of degree $n$ then
\[
L_n(h)=\sum_{k\geq 0}(-1)^{k(k+n)}\sum_{j=1}^{\alpha_k}\bar a_j^k\smallfrown h(a_j^k)
\]
where $\{a_j^k\mid j=1,\dots,\alpha_k\}$ is a basis for $E_k$ with corresponding dual basis $\{\bar a_j^k\mid j=1,\dots,\alpha_k\}$ of $E^k,$ for each $k.$
\end{proposition}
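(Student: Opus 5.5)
The statement follows by unwinding the definition of $L_n(h)$ one degree at a time and applying the preceding proposition of \cite{GNO} in each degree. By definition, $L_n(h)=\sum_k(-1)^{k(k+n)}\trace(h_k)$, where $h_k:E_k\to E_{k+n}$ is the restriction of $h$ to degree $k$. Since $E_\ast$ is finite dimensional, only finitely many $E_k$ are nonzero, so the sum is finite and may be rearranged freely.

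The key step is to fix $k$ and express $\trace(h_k)=\smallfrown\Theta^{-1}(h_k)$ in terms of a basis. Choose a basis $\{a^k_j\}$ of $E_k$ with dual basis $\{\bar a^k_j\}$ of $E^k$, and set
\[
T=\sum_{j=1}^{\alpha_k}(-1)^{|h(a^k_j)||a^k_j|}\,\bar a^k_j\otimes h(a^k_j)\in E^k\otimes E_{k+n}.
\]
Evaluating $\Theta(T)$ on a basis vector $a^k_i$, the dual pairing kills every term with $j\neq i$. The surviving term carries the sign $(-1)^{|y||x|}$ from $\Theta$, which squares away against the sign built into $T$, so $\Theta(T)(a^k_i)=h(a^k_i)$. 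Hence $\Theta^{-1}(h_k)=T$, since $\Theta$ is an isomorphism.

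Applying the cap product to $T$ gives
\[
\trace(h_k)=(-1)^{k(k+n)}\sum_j\bar a^k_j\smallfrown h(a^k_j).
\]
With the original convention of \cite{GNO} the formula would carry no sign; the factor $(-1)^{k(k+n)}$ appears only because of the modified $\Theta$ noted in the Remark.

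The main obstacle is exactly this sign bookkeeping. The stated formula, which is presumably what the authors mean by invoking \cite[Proposition 1.2]{GNO}, follows by applying that proposition in each degree and summing with the weights $(-1)^{k(k+n)}$. The subtlety is that, under the modified $\Theta$, the per-degree sign may cancel the weight $(-1)^{k(k+n)}$ rather than reproduce it. I would resolve this by reading the preceding proposition as a statement about the trace taken with respect to the \cite{GNO} convention, and keeping the weight $(-1)^{k(k+n)}$ as in the definition of $L_n(h)$. Once the convention is fixed consistently, summing over $k$ gives the displayed expression immediately.
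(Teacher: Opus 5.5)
Your final argument (take the per-degree basis formula for $\trace(h_k)$ from the preceding proposition and sum with the weights $(-1)^{k(k+n)}$ from the definition of $L_n(h)$) is exactly the paper's argument. The paper's proof is nothing more than quoting GNO's Proposition 1.2.

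Your explicit computation with the signed $\Theta$ is also correct, but its conclusion is stronger than you say: the sign does not ``may'' cancel, it always cancels. On $E^k\otimes E_{k+n}$ the signed $\Theta$ equals $(-1)^{k(k+n)}$ times GNO's $\Theta$. Hence $\trace(h_k)=(-1)^{k(k+n)}\sum_j\bar a^k_j\smallfrown h(a^k_j)$, and this cancels the weight in every degree. With the conventions the paper literally declares, the definition therefore gives
\[
L_n(h)=\sum_{k}\sum_{j}\bar a^k_j\smallfrown h(a^k_j).
\]
This differs from the displayed formula by $2\sum_{k\ \mathrm{odd}}\sum_j\bar a^k_j\smallfrown h(a^k_j)$ whenever $n$ is even. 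For instance, take $h=\Id$ on $E_\ast=C_\ast=H_\ast(S^1;\mathbb{Q})$ with the usual normalisation $\varepsilon(\alpha\smallfrown\sigma)=\langle\alpha,\sigma\rangle$: the literal definitions give $2[pt]$, while the stated formula gives $0=\chi(S^1)[pt]$. The inconsistency lives in the preceding proposition. That proposition is GNO's formula, and it is off by exactly your factor once $\Theta$ is signed; the paper's derivation uses it anyway.

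So your ``resolution'' is not a proof step but a correction of the setup, and it should be stated explicitly at the outset. The statement holds if the trace in $L_n$ is taken with GNO's unsigned $\Theta$. Equivalently, it holds with the signed $\Theta$ and the unweighted sum $L_n(h)=\sum_k\trace(h_k)$. Under either convention it is the one-line consequence you describe. As written, your proposal derives the signed per-degree formula and then sums the unsigned one, which is self-contradictory. Keep the signed computation only as the justification for why the convention has to be fixed.
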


We have analogous definitions considering the dual graded vector space $E^{\ast}$ see \cite[Definitions 1.5 and 1.7]{GNO}.

\subsection{The Knill trace for a parametrized map}

Let $F:(X,A)\times Y\to(X,A)$ be a continuous parametrized map, where $Y$ is the parameter space, $\mathbb{F}$ a field and $(X,A)$ a topological pair of spaces such that $H_{\ast}(X,A)$ is finite dimensional of $\mathbb{F}.$ We say that $(x,y)$ is a fixed point of $F$ if $F(x,y)=x.$

Let $u\in H_n(Y)$ and $F_u:H(X,A)\to H(X,A)$ be the homomorphism of degree $n$ given by $F_u(v)=F_{\ast}(u\times v).$ Let $H^j(X,A)\otimes H_{j+n}(X,A)\stackrel{{\scriptstyle\smallfrown}}{{\to}}H_n(X)$ the cap product. In \cite{Knill} was given the following definition of a Lefschetz class of $F,$ $L_n(F_u)\in H_n(X).$

\begin{definition}
The Knill trace of $F:(X,A)\times Y\to(X,A)$ is the homomorphism of degree 0 $L(F):H_{\ast}(Y)\to H_{\ast}(X)$ given by $L(F)(u)=L_n(F_u)\in H_n(X),$ where $u\in H_n(Y).$
\end{definition}

Follows from \cite[Theorem 1]{Knill} which if $L(F)$ is the non trivial homomorphism then $F$ has a fixed point. Note that $L(F)$ depends only the homotopy class of $F,$ and from \cite[Proposition 2]{GNO} we have;
\[
L(F)(u)=\sum_{k\geq 0}(-1)^{k(k+n)}\sum_{j=1}^{\beta_k}\bar b_j^k\smallfrown F_{\ast}(u\times b_j^k)
\]
where $u\in H_n(Y)$ and $\{b_j^k\mid j=1,\dots,\beta_k\}$ is a basis for $H_k(X,A)$ with corresponding dual basis $\{\bar b_j^k\mid j=1,\dots,\beta_k\}$ of $H^k(X,A),$ for each $k.$ For more details and some calculations of this trace see \cite{GNO}.

\subsection{The Lefschetz and coincidence homomorphisms}

Substantial part of this subsection come from \cite{Saveliev}. From now on the homology and cohomology groups will be with rational coefficients.

\begin{remark}
The theory developed in \cite{Saveliev} consider the manifold $M$ possibly with boundary. Thus only in this section we will suppose which $M$ may have boundary.
\end{remark}

Let $f:(X,A)\to(M,\partial M)$ and $g:X\to M$ be two maps, where $X$ is a topological space, with $A\subset X,$ and $M$ is an oriented connected compact $m$-manifold with boundary $\partial M.$

We suppose that $\Coin(f,g)\cap A=\emptyset.$ The map $(f\times g)\circ\delta:(X,A)\to M^{\times}$ is well defined.

\begin{definition}
The coincidence homomorphism of the pair $(f,g)$ is the homomorphism $I_{fg}:H(X,A)\to H(M^{\times})$ of degree $0$ defined by
\[
I_{fg}=(f\times g)_{\#}\circ\delta_{\#}.
\]
\end{definition}

Is not difficult to see that $I_{fg}\neq 0\Longrightarrow\Coin(f,g)\neq\emptyset.$ There is a representation formula for $I_{fg}$ in terms of the fundamental class $O_M$ of $M,$ see \cite[Theorem 4.5]{Saveliev}.

We consider the transfer of $f$ with respect to a fixed element $z\in H_{n+m}(X,A),$ that is, the homomorphism $f_!^z:H_{\ast}(M)\to H_{\ast+n}(X)$ of degree $n$ defined by;
\[
f_!^z=(f^{\#}\circ D^{-1})\smallfrown z,
\]
where $D:H^{\ast}(M,\partial M)\to H_{m-\ast}(M)$ is the Poincar\'e-Lefschetz duality isomorphism with
\[
D(x)=x\smallfrown O_M,
\]
and $O_M\in H_m(M,\partial M)$ is the fundamental class of $(M,\partial M).$ Therefore the homomorphism $g_{\#}\circ f_!^z:H(M)\to H(M)$ has degree $n.$

\begin{definition}
The Lefschetz homomorphism $\Lambda_{fg}:H_{\ast}(X,A)\to H_{\ast-m}(M)$ of the pair $(f,g)$ is the homomorphism of degree $(-m)$ given by
\[
\Lambda_{fg}(z)=L(g_{\#}\circ f_!^z),\qquad z\in H_{n+m}(X,A).
\]
\end{definition}

Note that the degree of $g_{\#}\circ f_!^z$ is $|z|-m.$ Thus we can represent the Lefschetz homomorphism $\Lambda_{fg}$ as a Knill like trace by
\[
\Lambda_{fg}(z)=\sum_k(-1)^{k(k+|z|-m)}\sum_j\bar a_j^k\smallfrown g_{\#}\circ f_!^z(a_j^k),
\]
where $\{a_j^k\mid j=1,\dots,\alpha_k\}$ is a basis for $H_k(M)$ with corresponding dual basis $\{\bar a_j^k\mid j=1,\dots,\alpha_k\}$ of $H^k(M),$ for each $k.$ Follows from \cite[Theorem 6.1]{Saveliev} that $\Lambda_{fg}(z)\neq 0$ then the pair $(f,g)$ has a coincidence. The next result gives a relation between the coincidence homomorphism and the Lefschetz homomorphism.

\begin{theorem}\label{saveliev-thm}
\cite[Theorem 6.1]{Saveliev} The coincidence homomorphism is equal to the Lefschetz homomorphism:
\[
\varphi_M\circ I_{fg}=\Lambda_{fg}.
\]
Moreover, if $\Lambda_{fg}\neq 0$ then $(f,g)$ has a coincidence.
\end{theorem}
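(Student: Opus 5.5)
Theorem \ref{saveliev-thm} asserts $\varphi_M\circ I_{fg}=\Lambda_{fg}$. The plan is to compute both sides on a fixed class $z\in H_{n+m}(X,A)$ and compare. The left side will be rewritten using a representation of the Thom class $\tau_M$ in terms of a basis of $H(M)$. The right side will be expanded as the Knill-like trace given just before the statement.

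\textbf{Step 1: the Thom class.} Choose a basis $\{a^k_j\}$ of $H_k(M)$ and let $\{\bar a^k_j\}$ be the dual basis of $H^k(M)$. By Poincaré--Lefschetz duality there are classes $D^{-1}(a^k_j)\in H^{m-k}(M,\partial M)$. I will use the standard formula for the Thom class (diagonal class) of $M^\times$, obtained as in Milnor--Stasheff from the Künneth theorem:
\[
\tau_M=\sum_{k,j}\pm\,\bar a^k_j\times D^{-1}(a^k_j),
\]
with the signs fixed by the sign convention in $\Theta$. This step is the main obstacle, because of the boundary: one must work with the pair $(M\times M,\,M\times M\setminus\Delta_M)$ in a way compatible with $f:(X,A)\to(M,\partial M)$. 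To justify it, I will check that the right-hand side restricts to the generator of each fibre $H^m(M,M\setminus x)$. This follows from $\langle \bar a\smallfrown O_M\rangle$-type duality computations. I will then use uniqueness of the Thom class.

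\textbf{Step 2: compute the left side.} We have
\[
\varphi_M I_{fg}(z)={\pi_1}_{\#}\bigl(\tau_M\smallfrown (f\times g)_\#\delta_\# z\bigr).
\]
Naturality of the cap product gives
\[
(f\times g)_\#\bigl(\delta^\#(f\times g)^\#\tau_M\smallfrown z\bigr),\qquad \delta^\#(f\times g)^\#(\alpha\times\beta)=f^\#\alpha\smallsmile g^\#\beta.
\]
Substituting Step 1 and applying ${\pi_1}_\#$ turns this into a sum of terms of the form
\[
\pm\, f_\#\bigl((f^\#\bar a^k_j\smallsmile g^\#D^{-1}a^k_j)\smallfrown z\bigr).
\]
By the cup--cap associativity and the projection formula, each such term equals
\[
\pm\,\bar a^k_j\smallfrown f_\#\bigl(g^\#D^{-1}(a^k_j)\smallfrown z\bigr).
\]
Here the roles of $f$ and $g$ (first or second factor) must be matched with the projection $\pi_1$. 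I will track which map goes into the $D^{-1}$ slot, so that the term takes the form $\bar a\smallfrown g_\# f^z_!(a)$. If the factors come out swapped, I will instead use the symmetric expression of $\tau_M$ with the factors interchanged.

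\textbf{Step 3: compare with the right side.} The right side is
\[
\Lambda_{fg}(z)=\sum_k(-1)^{k(k+|z|-m)}\sum_j\bar a^k_j\smallfrown g_\# f^z_!(a^k_j),
\]
and the summands match those from Step 2 term by term. It remains to check that the signs agree. They come from the Künneth cross product, the commutation of cup products, and the factor $(-1)^{|x||y|}$ in $\Theta$. Verifying this is routine bookkeeping, and the modified sign convention in $\Theta$ is chosen precisely to make it work.

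\textbf{Coincidence statement.} The final claim follows directly. If $\Lambda_{fg}\neq0$, then $I_{fg}\neq0$. If instead $\Coin(f,g)=\emptyset$, then $(f\times g)\circ\delta$ would map $X$ into $M\times M\setminus\Delta_M$. The map $I_{fg}$ would then factor through $H(M\times M\setminus\Delta_M,\,M\times M\setminus\Delta_M)=0$, a contradiction.
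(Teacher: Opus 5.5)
You have a genuine gap in Steps 2 and 3, and a second one in Step 1.

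\textbf{Steps 2--3: the computation produces the wrong composite.} Your Step 2 is correct as algebra, but the remedy you announce cannot fix the result. Since $\pi_1\circ(f\times g)\circ\delta=f$, the outer push-forward is always $f_\#$, so the projection formula can only extract a class pulled back by $f$.
\begin{itemize}
\item With your expansion you get $\pm\bar a\smallfrown f_\#\bigl(g^\#D^{-1}(a)\smallfrown z\bigr)=\pm\bar a\smallfrown f_\#g^z_!(a)$. This is a trace of $f_\#\circ g^z_!$, not of $g_\#\circ f^z_!$.
\item With the factors of $\tau_M$ interchanged you get $\pm D^{-1}(a)\smallfrown f_\#\bigl(g^\#\bar a\smallfrown z\bigr)=\pm f_\#\bigl(g^\#\bar a\smallfrown f^z_!(a)\bigr)$.
\end{itemize}
No ordering produces $\bar a\smallfrown g_\#(\cdots)$, because that needs a push-forward along $g$. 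So the term-by-term matching of Step 3 cannot happen. Already for $X=M$, $f=\Id$, $z=O_M$, your summands are $\bar a\smallfrown g_!(a)$, with $g_!=D\circ g^\#\circ D^{-1}$ the umkehr map, whereas $\Lambda_{fg}(O_M)$ is built from $\bar a\smallfrown g_\#(a)$. The two sums can only be compared through a further duality argument, and supplying that argument in general is exactly what is missing.

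The missing ingredient is the identity $\pi_{1\#}(\tau_M\smallfrown w)=\pi_{2\#}(\tau_M\smallfrown w)$ for every $w\in H(M^{\times})$. It holds because, by excision, $\tau_M\smallfrown w$ is carried by a tubular neighbourhood $U$ of $\Delta_M$, and $\pi_1|_U\simeq\pi_2|_U$. Use $\pi_2$, for which $\pi_2\circ(f\times g)\circ\delta=g$, and write the expansion as $\sum\pm D^{-1}(a)\times\bar a$. Graded commutativity and the projection formula for $g$ then give exactly $\pm\bar a\smallfrown g_\#\bigl(f^\#D^{-1}(a)\smallfrown z\bigr)=\pm\bar a\smallfrown g_\#f^z_!(a)$.

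\textbf{Step 1: the boundary case.} Step 1 fails in the generality of the statement. As the paper stipulates in that subsection, $M$ may have boundary, $z\in H(X,A)$, and only $f$ is a map of pairs; the paper itself gives no proof and simply cites Saveliev.
\begin{itemize}
\item In your expansion the relative class $D^{-1}(a)\in H^{m-k}(M,\partial M)$ sits in the second slot and is pulled back by $g$. So $g^\#D^{-1}(a)\smallfrown z$ is not defined. The relative class must sit in the $f$-slot, i.e.\ you need $\sum\pm D^{-1}(a)\times\bar a\in H^m(M\times M,\partial M\times M)$. That is precisely the ordering that forces the switch to $\pi_2$ above.
\item Your justification does not apply. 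A K\"unneth expression is a class on $M\times M$ (or relative to $\partial M\times M$), not on the pair $M^{\times}$, so it has no restrictions to the fibres $(M,M\setminus x)$.
\item For $x\in\partial M$ one has $H^m(M,M\setminus x)=0$. So $\pi_1\colon M^{\times}\to M$ is not a bundle pair, and no uniqueness statement is available.
\end{itemize}
What has to be proved is that $\tau_M$ and $\sum\pm D^{-1}(a)\times\bar a$ have the same image in $H^m(M\times M,\partial M\times M\setminus\Delta_M)$. This suffices because $(f\times g)\circ\delta$ maps $(X,A)$ into that pair, since $\Coin(f,g)\cap A=\emptyset$. In the closed case it is the Milnor--Stasheff diagonal-class formula.

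\textbf{Signs.} The theorem is an exact identity, so the signs in Step 3 must actually be computed. This includes the signs introduced by switching the projection and the order of the factors.

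Your concluding coincidence argument is fine.
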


The Lefschetz homomorphism for coincidence generalizes the Knill trace of a parametrized map defined in \cite[Definition 2.1]{GNO}.

\begin{corollary}
\cite[Corollary 5.7]{Saveliev} Let $F,p:M\times Y\to M$ be maps where $p$ is the projection. We have $\Fix(F)=\Coin(p,F)$ and
\[
\Lambda_{pF}(O_M\times u)=L({F_u}_{\#}),\qquad u\in H(Y),
\]
where $F_u:H(M)\to H(M)$ is given by $F_u(x)=(-1)^{(m-|x|)|u|}F_{\#}(x\times u).$
\end{corollary}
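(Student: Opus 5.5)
The plan is to reduce the corollary to Theorem \ref{saveliev-thm} applied with $f=p$ and $g=F$, and then evaluate the coincidence homomorphism on the product class $O_M\times u$ directly. The set-theoretic identity $\Fix(F)=\Coin(p,F)$ is immediate: $p(x,y)=x$, so $p(x,y)=F(x,y)$ holds exactly when $F(x,y)=x$. For the homological identity, $X=M\times Y$ and $A=\emptyset$. If $M$ has boundary we take $X=(M,\partial M)\times Y$ and assume no coincidences over $\partial M\times Y$. Then $f=p$ is a map of pairs and Theorem \ref{saveliev-thm} gives
\[
\Lambda_{pF}(O_M\times u)=\varphi_M\bigl((p\times F)_{\#}\delta_{\#}(O_M\times u)\bigr).
\]

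First, compute the transfer $p_!^{z}$ with $z=O_M\times u$, where $|z|=m+|u|$, so $n=|u|$. For $x\in H(M)$, let $c=D^{-1}(x)$, so that $c\smallfrown O_M=x$. Then $p^{\#}c=c\times 1$, and Lemma \ref{cap-product} (the case $n=2$, with $r_2=0$) gives
\[
(c\times 1)\smallfrown(O_M\times u)=(c\smallfrown O_M)\times u=x\times u,
\]
with no sign. Hence $F_{\#}\circ p_!^{z}(x)=F_{\#}(x\times u)$.

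Next, compare $x\times u$ with $u\times x$ as needed. The definition of $F_u$ in the statement carries the sign $(-1)^{(m-|x|)|u|}$. This sign arises when one normalises $\Theta$, or the cross product order, against the graded-commutativity sign. Concretely, the graded degree of $c$ is $m-|x|$, and moving $c$ past $u$ in the cap/cross formula produces exactly $(-1)^{(m-|x|)|u|}$ from the $A_2=r_2(s_1-r_1)$ term of Lemma \ref{cap-product}. So I will redo the transfer computation keeping track of whether $p^{\#}c$ is written as $c\times 1$ on $M\times Y$. I will then check that the resulting map $g_{\#}\circ f_!^z$ coincides with ${F_u}_{\#}$ up to this degree-dependent sign. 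That sign is absorbed into the definition of $F_u$.

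Finally, by the definition $\Lambda_{pF}(z)=L(F_{\#}\circ p_!^{z})$ together with the identification above, we get $\Lambda_{pF}(O_M\times u)=L({F_u}_{\#})$. The main obstacle is the sign bookkeeping in the previous step: reconciling the modified $\Theta$ convention, the factor $(-1)^{k(k+n)}$ in $L_n$, and the Lemma \ref{cap-product} sign. I would first try the direct check on basis elements through Proposition \ref{knill-expression}, comparing the two sums term by term.
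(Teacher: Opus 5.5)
\textbf{There is a genuine gap, and it sits exactly where the corollary's content is: the sign.}

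Your overall route is the right one. Everything reduces to computing the transfer $p_!^{z}$ for $z=O_M\times u$ and comparing $F_\#\circ p_!^{z}$ with $F_u$. After that the claim is just the definition $\Lambda_{pF}(z)=L(F_\#\circ p_!^{z})$; the detour through $\varphi_M\circ I_{pF}$ is never used.

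Your computation with Lemma \ref{cap-product} is correct. Since $r_2=|1|=0$, you get $A_2=r_2(s_1-r_1)=0$, so $F_\#\circ p_!^{z}(x)=F_\#(x\times u)$ with no sign. You then claim that the same $A_2$ term produces $(-1)^{(m-|x|)|u|}$. It does not: that sign is $(-1)^{r_1s_2}$, which is not of the form $r_2(s_1-r_1)$.

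Nor can the sign be ``absorbed into the definition of $F_u$''. $F_u$ is fixed by the statement, and a degree-dependent sign is not harmless. Let $h=F_\#\circ p_!^{z}$, of degree $n=|u|$, and let $h'$ equal $(-1)^{(m-k)n}h$ on $H_k(M)$. For odd $n$, $L_n(h')$ is obtained from $L_n(h)$, up to the global sign $(-1)^m$, by reversing the sign of all odd-degree traces relative to the even-degree ones.

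A concrete case: take $M=T^2=S^1\times S^1$ as a group, $Y=S^1$, $F(x,y)=x\cdot(y,-1)$ and $u=[S^1]$. Then $F$ is fixed point free. Yet $\trace(h_0)=a_1$ and $\trace(h_1)=\pm a_1$, where $a_1=[S^1\times\{*\}]$, so exactly one of $L_1(h)$, $L_1(h')$ vanishes.

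The modified $\Theta$ and the factors $(-1)^{k(k+n)}$ do not help either, since they enter both sides of the identity in the same way. What must be proved is the equality of maps $F_\#\circ p_!^{z}=F_u$.

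The missing ingredient is the cap-product convention. The sign in $F_u$ is exactly the interchange sign in a convention where
\[
(a\times b)\smallfrown(\alpha\times\beta)=(-1)^{|a|(|\beta|-|b|)}(a\smallfrown\alpha)\times(b\smallfrown\beta),
\]
rather than the sign $(-1)^{|b|(|\alpha|-|a|)}$ recorded in Lemma \ref{cap-product}. This is the convention the cited corollary presupposes. With it,
\[
p_!^{z}(x)=(D^{-1}x\times 1)\smallfrown(O_M\times u)=(-1)^{(m-|x|)|u|}\,x\times u .
\]
Hence $F_\#\circ p_!^{z}=F_u$ exactly, and the corollary is immediate from the definition of $\Lambda_{pF}$. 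If you keep the sign of Lemma \ref{cap-product}, your computation instead proves the formula with $F_u(x)=F_\#(x\times u)$ and no sign. You have to fix one convention and carry it through consistently.
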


\section{The coincidence homomorphism for $p$-maps}

Let $f_1,\cdots,f_p:X\to M$ continuous maps, where $p\geq 2,$ $X$ is a topological space and $M$ is an oriented connected compact $m$-manifold without boundary. We define $F,G:X\to M^{p-1},$ the $(p-1)$-fold product of $M,$ by
\[
F=(f_1,\cdots,f_1)\qquad\text{and}\qquad G=(f_2,\cdots,f_p).
\]
Note that $\Coin(F,G)=\Coin(f_1,\cdots,f_p).$

\begin{proposition}
The map $f:X\to M^p$ given by $f=(f_1,\cdots,f_p)$ can be deformed to a map $g=(g_1,\cdots,g_p)$ such that $\Coin(g_1,\cdots,g_p)=\emptyset$ if and only if the pair $(F,G)$ can be deformed to a pair of maps $(F',G')$ such that $\Coin(F',G')=\emptyset.$
\end{proposition}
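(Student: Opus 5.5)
The plan is to prove both implications by explicit constructions of homotopies, using throughout the identity $\Coin(F,G)=\Coin(f_1,\cdots,f_p)$ noted above. More precisely, for any maps $g_1,\dots,g_p:X\to M$ we have
\[
\Coin\big((g_1,\dots,g_1),(g_2,\dots,g_p)\big)=\Coin(g_1,\dots,g_p).
\]

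For the direct implication, suppose $H_t=(h^1_t,\dots,h^p_t):X\to M^p$ is a homotopy from $f$ to $g$ with $\Coin(g_1,\dots,g_p)=\emptyset$. I will set $F_t=(h^1_t,\dots,h^1_t)$ and $G_t=(h^2_t,\dots,h^p_t)$. These are homotopies from $(F,G)$ to $F'=(g_1,\dots,g_1)$ and $G'=(g_2,\dots,g_p)$. By the displayed identity, $\Coin(F',G')=\Coin(g_1,\dots,g_p)=\emptyset$.

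The converse is the substantive direction. Suppose $F_t=(a^2_t,\dots,a^p_t)$ is a homotopy from $F$ to $F'$ and $G_t=(b^2_t,\dots,b^p_t)$ is a homotopy from $G$ to $G'$, with $\Coin(F',G')=\emptyset$. The difficulty is that $F'$ need not be diagonal: its components $a^i_1$ may differ, so $F'$ need not have the form $(g_1,\dots,g_1)$. I will get around this by not insisting that the first component stay on the diagonal, and instead use the coordinatewise conditions. For each $x\in X$, emptiness of $\Coin(F',G')$ means there is some $i\in\{2,\dots,p\}$ with $a^i_1(x)\neq b^i_1(x)$.

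The key step is to transfer this condition to a deformation of $f$ with only one first component. The idea is to homotope each $f_i$ ($i\geq2$) to a map that is ``$f_1$ corrected by the discrepancy $a^i_t$ versus $f_1$''. This requires transporting $b^i_t$ along the path $a^i_t$ back to the constant $f_1$, and that needs a global structure on $M$, for instance $M$ a Lie group, where one can set $g_i=b^i_1\cdot(a^i_1)^{-1}\cdot f_1$. On a general manifold such transport is unavailable.

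Instead I will pass through the pair formulation: the deformation problem for $(F,G)$ is the problem of deforming the single map $(F,G):X\to M^{p-1}\times M^{p-1}$ off $\Delta_{M^{p-1}}$. It is also equivalent to deforming $G$ alone, keeping $F$ fixed, off the graph of $F$, by the homotopy lifting property of the fibration $M^{p-1}\times M^{p-1}\setminus\Delta\to M^{p-1}$ (projection to the first factor, as in $M^\times_{p-1}$). Concretely, the homotopy $F_t$ lifts: since $(F',G')$ lands in the complement of the diagonal, and $\pi'$ restricted there is a fibration (a locally trivial bundle with fiber a punctured manifold), I will lift the reversed homotopy $F_{1-t}$, starting at $(F',G')$, to a homotopy $(F_{1-t},\tilde G_t)$ inside the complement. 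This yields $\tilde G$ homotopic to $G'$, hence to $G$, with $\Coin(F,\tilde G)=\emptyset$.

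With this in hand, the proof finishes quickly. Write $\tilde G=(g_2,\dots,g_p)$ and set $g_1=f_1$. Then $g=(f_1,g_2,\dots,g_p)$ is homotopic to $f$, and by the displayed identity $\Coin(g_1,\dots,g_p)=\Coin(F,\tilde G)=\emptyset$.

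The main obstacle is this lifting step: justifying that $\pi':M^{p-1}\times M^{p-1}\setminus\Delta\to M^{p-1}$ is a Hurewicz fibration, or at least has homotopy lifting for arbitrary $X$. I would argue this via local triviality from Fadell–Neuwirth style charts, since $M$ is a closed manifold, and the fact that a locally trivial bundle over a paracompact base is a Hurewicz fibration.
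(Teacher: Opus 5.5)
Your proof is correct, and it follows a different route from the paper, which gives no argument of its own: it just specializes \cite[Proposition 2.2]{MSilva}, a fiber-bundle statement, to the case where the base $B$ is a point.

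You instead give a self-contained argument. The forward direction is the componentwise one. For the converse you use the classical Brooks-type lifting argument. The map $\pi'\colon M^{p-1}\times M^{p-1}\setminus\Delta_{M^{p-1}}\to M^{p-1}$ is the Fadell--Neuwirth fibration: it is locally trivial with fiber $M^{p-1}$ minus a point, over a paracompact base. It is therefore a Hurewicz fibration, with homotopy lifting for an arbitrary space $X$. Lifting the reversed homotopy $F_{1-t}$, starting at $(F',G')$, gives $\tilde G\simeq G'\simeq G$ with $\Coin(F,\tilde G)=\emptyset$. Since $F$ is still diagonal, $(f_1,g_2,\dots,g_p)$ is then a coincidence-free deformation of $f$.

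Your route has three advantages. It makes the proof independent of the external reference. It gives a slightly stronger conclusion: the deformation can be chosen to keep $f_1$ fixed. It also shows exactly which hypothesis is used, namely that $M$ is a manifold without boundary. That is what makes $\pi'$ locally trivial, and it fails near $\partial M$, where the fibers change type. Orientability and connectedness are not used. The citation route buys brevity and places the statement inside the parametrized theory of \cite{MSilva}.

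The Lie group detour in your third paragraph plays no role in the final argument and can be deleted.
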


\begin{proof}
Follows from \cite[Proposition 2.2]{MSilva} in the particular case when $B$ is a point.
\end{proof}

From Proposition 3.1 is reasonable to define a Lefschetz homomorphism for $(F,G)$ to study the coincidence set of the $p$-maps $f_1,\cdots,f_p.$

Let $A\subset X$ such that $\Coin(F,G)\cap A=\emptyset.$ The map $(F\times G)\circ\delta:(X,A)\to M^{\times}_{p-1}$ is well defined, where $\delta:X\to X\times X$ is given by $\delta(x)=(x,x).$

\begin{definition}
The coincidence homomorphism of the $p$-maps $(f_1,\cdots,f_p),$ is the homomorphism $I_{FG}:H(X,A)\to H(M^{\times}_{p-1})$ of degree $0$ defined by
\[
I_{FG}=(F\times G)_{\#}\circ\delta_{\#}.
\]
We will denote $I_{f_1\cdots f_p}=I_{FG}.$
\end{definition}

Is not difficult to see that $I_{FG}\neq 0\Longrightarrow\Coin(F,G)\neq\emptyset\Longrightarrow\Coin(f_1,\cdots,f_p)\neq\emptyset.$

\begin{proposition}
From dimensional reasons we have, if $z\in H_j(X,A)$ for $j<m(p-1)$ or $j>2m(p-1)$ then $I_{FG}(z)=0.$
\end{proposition}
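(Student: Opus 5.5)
Since $I_{FG}=(F\times G)_{\#}\circ\delta_{\#}$ factors through $H_j(M^{\times}_{p-1})$, the plan is to show that $H_j(M^{\times}_{p-1})=0$ for $j<m(p-1)$ and for $j>2m(p-1)$. We write $N=M^{p-1}$; it is a closed, connected, oriented manifold of dimension $n=m(p-1)$. By definition, $M^{\times}_{p-1}=(N\times N,\,N\times N\setminus\Delta_N)$.

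For the upper bound, note that $N\times N$ is a closed manifold of dimension $2n$. The relative homology of any pair $(W,W\setminus K)$, with $W$ a $2n$-manifold and $K$ closed, vanishes in degrees above $2n$; by excision it reduces to the homology of a neighbourhood of $K$ relative to its complement there. More simply, the long exact sequence of the pair gives
\[
H_j(N\times N\setminus\Delta_N)\to H_j(N\times N)\to H_j(M^{\times}_{p-1})\to H_{j-1}(N\times N\setminus\Delta_N).
\]
For $j>2n$ we have $H_j(N\times N)=0$, and $H_{j-1}(N\times N\setminus\Delta_N)=0$ because $j-1\geq 2n$ and the complement is a noncompact connected $2n$-manifold, so its $2n$-th homology vanishes. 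Hence $H_j(M^{\times}_{p-1})=0$ for $j>2n$.

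For the lower bound, we use that $M^{\times}_{p-1}\stackrel{\pi'}{\to}N$ is the fiber bundle pair whose Thom class $\tau_{M^{\times}_{p-1}}$ was introduced above. Concretely, by excision to a tubular neighbourhood of $\Delta_N$, which is the normal (tangent) bundle of $N$ and is oriented since $N$ is, this pair is the disk/sphere bundle pair of an oriented rank-$n$ vector bundle over $N$. The Thom isomorphism, the analogue for $N$ of $\varphi_M$, then gives
\[
H_j(M^{\times}_{p-1})\cong H_{j-n}(N),
\]
and $H_{j-n}(N)=0$ whenever $j<n$. The same isomorphism yields the upper bound again, since $H_{j-n}(N)=0$ for $j-n>n$. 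So in fact the Thom isomorphism alone handles both cases, and I would present it that way.

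The only step needing care is justifying the Thom isomorphism for $M^{\times}_{p-1}$. This requires $\Delta_N$ to have an oriented tubular neighbourhood in $N\times N$ and the excision to hold. Both are standard for closed smooth manifolds, or via the Thom class of the pair for topological ones. Orientability of $N$ follows from that of $M$, which supplies the orientation needed with $\mathbb{Q}$ coefficients. With this in hand, both vanishing ranges follow at once, and therefore $I_{FG}(z)=0$.
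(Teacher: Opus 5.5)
Your argument is correct and is the "dimensional reason" the paper has in mind, which it leaves unwritten: $I_{FG}$ has degree $0$, and the Thom isomorphism $\varphi_{M^{\times}_{p-1}}$ (used in Theorem \ref{lefschetz-p}) identifies $H_j(M^{\times}_{p-1})$ with $H_{j-m(p-1)}(M^{p-1})$, which vanishes unless $m(p-1)\le j\le 2m(p-1)$. Your separate long-exact-sequence argument for the upper bound is also valid, but it is redundant, as you already note.
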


\begin{proposition}\label{psi-homeo}
The map $\psi:M^{\times(p-1)}\to M^{\times}_{p-1}$ define by;
\[
\psi((x_1,x_2),(x_3,x_4),\cdots,(x_{2(p-1)-1},x_{2(p-1)}))
=((x_1,x_3,\cdots,x_{2(p-1)-1}),(x_2,x_4,\cdots,x_{2(p-1)}))
\]
is a homeomorphism.
\end{proposition}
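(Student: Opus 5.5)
The plan is to treat $\psi$ first as a map of total spaces, show it is a homeomorphism there, and then check separately that it carries the subspace of $M^{\times(p-1)}$ exactly onto the subspace of $M^{\times}_{p-1}$.

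\textbf{Total spaces.} The total space of $M^{\times(p-1)}$ is $(M\times M)^{p-1}$. The total space of $M^{\times}_{p-1}$ is $M^{p-1}\times M^{p-1}$. On these, $\psi$ is just a permutation of the $2(p-1)$ coordinates, sending odd-indexed coordinates to the first factor and even-indexed ones to the second. A coordinate permutation of a finite product is continuous, since each component of $\psi$ is a projection. Its inverse,
\[
\psi^{-1}((y_1,\dots,y_{p-1}),(w_1,\dots,w_{p-1}))=((y_1,w_1),\dots,(y_{p-1},w_{p-1})),
\]
is also a coordinate permutation, hence continuous. So $\psi$ is a homeomorphism of total spaces.

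\textbf{Subspaces.} By the definition of the product of pairs, the subspace of $M^{\times(p-1)}$ is
\[
B=\bigcup_{i=1}^{p-1}Y_i,\qquad Y_i=\{((x_1,x_2),\dots,(x_{2p-3},x_{2p-2}))\mid x_{2i-1}\neq x_{2i}\}.
\]
Equivalently, $B$ is the set of tuples with $(x_{2i-1},x_{2i})\notin\Delta_M$ for at least one $i$. The subspace of $M^{\times}_{p-1}$ is $M^{p-1}\times M^{p-1}\setminus\Delta_{M^{p-1}}$. This consists of the pairs $(y,w)$ with $y\neq w$, meaning $y_i\neq w_i$ for some $i$. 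Under $\psi$ we have $y_i=x_{2i-1}$ and $w_i=x_{2i}$. Hence a point lies in $B$ if and only if its image lies in $M^{p-1}\times M^{p-1}\setminus\Delta_{M^{p-1}}$, which gives $\psi(B)=M^{p-1}\times M^{p-1}\setminus\Delta_{M^{p-1}}$. Therefore $\psi$ is a homeomorphism of pairs.

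The only point needing any care is this set-theoretic identification. It reduces to the fact that two tuples differ exactly when they differ in some coordinate. I do not expect a real obstacle anywhere.
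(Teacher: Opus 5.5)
Your proof is correct and follows the same route as the paper. The paper likewise writes the subspace of $M^{\times(p-1)}$ as $\bigcup_i Y_i$, calls $\psi$ ``clearly'' well defined, and gives the inverse $\psi^{-1}((z_1,\dots,z_{p-1}),(w_1,\dots,w_{p-1}))=((z_1,w_1),\dots,(z_{p-1},w_{p-1}))$; you make explicit two things it leaves unsaid, namely continuity of the coordinate permutation and the two-sided identity $\psi(\bigcup_i Y_i)=M^{p-1}\times M^{p-1}\setminus\Delta_{M^{p-1}}$, which is what makes $\psi^{-1}$ a map of pairs too.
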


\begin{proof}
We remember that
\[
M^{\times(p-1)}=\prod_{i=1}^{p-1}(M\times M,M\times M\setminus\Delta_M)
=\left(\prod_{i=1}^{p-1}M^2,\,\,\bigcup_{i=1}^{p-1}Y_i\right),
\]
where $Y_i=M^2\times\cdots\times M^2\times\underbrace{(M\times M\setminus\Delta_M)}_{i\text{-position}}\times M^2\times\cdots\times M^2.$ Now is clear that $\psi$ is well defined. The inverse $\psi^{-1}:M^{\times}_{p-1}\to M^{\times(p-1)}$ is naturally given by
\[
\psi^{-1}((z_1,\cdots,z_{p-1}),(w_1,\cdots,w_{p-1}))=((z_1,w_1),\cdots,(z_{p-1},w_{p-1})).
\]
\end{proof}

The map $\psi$ can be viewed as a product of projections. In fact, denote by $\pi_1^i:M^2=M\times M\to M$ and $\pi_2^i:M\times M\to M$ the projections on the first and the second coordinate, respectively, where $M^2$ is in the $i$-position on the products in $M^{\times(p-1)},$ $i=1,\cdots,p-1.$ Thus
\[
\psi=\widetilde{\pi_1}\times\widetilde{\pi_2}
\]
where $\widetilde{\pi_1},\widetilde{\pi_2}:\prod_{i=1}^{p-1}M^2\to M^{p-1}$ are given by
\[
\widetilde{\pi_1}=\pi_1^1\times\pi_1^2\times\cdots\times\pi_1^{p-1}
\qquad\text{and}\qquad
\widetilde{\pi_2}=\pi_2^1\times\pi_2^2\times\cdots\times\pi_2^{p-1}.
\]

For the next result we consider $A=\emptyset.$ Let $\delta_{p-1}:X\to X^{p-1}$ be the $(p-1)$-fold diagonal map and let
\[
\eta_X:H(X)\otimes\cdots\otimes H(X)\longrightarrow H(X^{p-1})
\]
be the K$\ddot{u}$nneth isomorphism. We define
\[
\Delta_X^{(p-1)}:=\eta_X^{-1}\circ(\delta_{p-1})_{\#}:H(X)\longrightarrow H(X)^{\otimes(p-1)}.
\]
Let also
\[
\eta:H(M^{\times})\otimes\cdots\otimes H(M^{\times})\longrightarrow H(M^{\times(p-1)})
\]
be the K$\ddot{u}$nneth isomorphism.
We have the following commutative diagram;
\[
\xymatrix@C=1.2em{
H(X) \ar[r]^-{\Delta_X^{(p-1)}} \ar[d]_-{I_{FG}} &  
H(X)^{\otimes(p-1)} \ar[rrr]^-{I_{f_1f_2}\otimes\cdots\otimes I_{f_1f_p}} & & &
H(M^{\times})^{\otimes(p-1)} \ar[r]^-{\eta} &
H(M^{\times(p-1)}) \ar[d]^-{\psi_{\#}} \\
H(M^{\times}_{p-1}) \ar@{=}[rrrrr] & & & & & H(M^{\times}_{p-1}).
}
\tag{2}
\]

\begin{theorem}\label{factor-I}
From Diagram (2) we obtain
\[
\psi_{\#}\circ\eta\circ(I_{f_1f_2}\otimes\cdots\otimes I_{f_1f_p})\circ\Delta_X^{(p-1)}=I_{FG}.
\]
\end{theorem}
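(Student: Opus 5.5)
The plan is to prove commutativity of Diagram (2) at the level of spaces, and then pass to homology using naturality of the homology cross product.

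\textbf{Step 1 (maps).} For each $i=2,\dots,p$ the map $(f_1\times f_i)\circ\delta:X\to M^{\times}$ is defined. I claim that
\[
\psi\circ\big((f_1\times f_2)\circ\delta\times\cdots\times(f_1\times f_p)\circ\delta\big)\circ\delta_{p-1}=(F\times G)\circ\delta
\]
as maps $X\to M^{\times}_{p-1}$. Evaluating at $x$, the left side gives
\[
\psi\big((f_1(x),f_2(x)),\dots,(f_1(x),f_p(x))\big)=\big((f_1(x),\dots,f_1(x)),(f_2(x),\dots,f_p(x))\big),
\]
and this is exactly $(F(x),G(x))$. We also have to check that this is a map of pairs. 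Here $A=\emptyset$, so the only issue is the target pair, and $\psi$ is a homeomorphism of pairs by Proposition \ref{psi-homeo}.

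\textbf{Step 2 (homology).} Apply $H$ and use functoriality. This reduces the claim to a single identity: for a product map $\Phi=\phi_1\times\cdots\times\phi_{p-1}:X^{p-1}\to (M^{\times})^{p-1}$, with $\phi_i=(f_1\times f_{i+1})\circ\delta$, we need
\[
\Phi_{\#}\circ\eta_X=\eta\circ\big((\phi_1)_{\#}\otimes\cdots\otimes(\phi_{p-1})_{\#}\big).
\]
This is the naturality of the cross product $\times$, including its relative version for pairs, which holds with no signs. Since $(\phi_i)_{\#}=I_{f_1f_{i+1}}$ by definition, we then compose with $\Delta_X^{(p-1)}=\eta_X^{-1}\circ(\delta_{p-1})_{\#}$. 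The factor $\eta_X\circ\eta_X^{-1}$ cancels and we obtain
\[
\psi_{\#}\circ\eta\circ(I_{f_1f_2}\otimes\cdots\otimes I_{f_1f_p})\circ\Delta_X^{(p-1)}=\psi_{\#}\circ\Phi_{\#}\circ(\delta_{p-1})_{\#}=\big((F\times G)\circ\delta\big)_{\#}=I_{FG}.
\]

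\textbf{Main obstacle.} The delicate point is whether $\eta_X$ is actually an isomorphism, which is needed for $\Delta_X^{(p-1)}$ to be defined. With rational coefficients the K\"unneth theorem gives this, but for $p\ge 3$ it needs $H(X)$ to be of finite type. Otherwise $\eta_X$ is only injective, not surjective. If $\eta_X$ fails to be surjective, I would instead argue on the image and define $\Delta_X^{(p-1)}$ only where it makes sense; in the setting of the paper I would simply assume the finiteness hypothesis implicit in using $\eta_X^{-1}$.

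A second check is needed for the target: $\eta$ on the relative pairs $M^{\times}$ must be compatible with the product-of-pairs convention defined earlier. This holds because $\{M\times M\setminus\Delta_M, M\times M\}$ is an excisive couple in the manifold setting, all the spaces involved being open subsets of $M\times M$.
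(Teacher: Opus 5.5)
Your argument is correct and follows the paper's approach. The paper only asserts that Diagram (2) commutes; you supply the reason, namely the pointwise identity $\psi\circ\Phi\circ\delta_{p-1}=(F\times G)\circ\delta$ together with sign-free naturality of the relative cross product.

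Your ``main obstacle'' is spurious, though. With rational (field) coefficients the homology K\"unneth map $\eta_X$ is an isomorphism for every space $X$: combine Eilenberg--Zilber with the algebraic K\"unneth formula over a field, where the $\mathrm{Tor}$ term vanishes. So no finite-type hypothesis is needed, and the theorem holds for arbitrary $X$ as stated. The finite-type condition, and the ``only injective'' behaviour you mention, belong to the cohomology cross product.
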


\begin{remark}
There is another way to define a type of coincidence homomorphism for the $p$-maps $f_1,\cdots,f_p.$ Consider the homomorphism
\[
\overline{I_{f_1\cdots f_p}}:=(f_1\times\cdots\times f_p)_{\#}\circ(\delta_p)_{\#}:H(X,A)\to H(M^p,M^p\setminus\Delta_p),
\]
where $\delta_p:X\to X^p$ is the $p$-fold diagonal and $\Delta_p=\{(x,\cdots,x)\mid x\in M\}.$ In this way is not possible to use the theory developed in \cite{Saveliev}. Despite that, the map $e:(M^p,M^p\setminus\Delta_p)\to M^{\times(p-1)}$ defined by
\[
e(x_1,\cdots,x_p)=((x_1,x_2),(x_1,x_3),\cdots,(x_1,x_p)),
\]
gives the following relation;
\[
\psi_{\#}\circ e_{\#}\circ\overline{I_{f_1\cdots f_p}}=I_{FG}.
\]
\end{remark}

From now on in the definition of the transfer and of the Lefschetz homomorphism we take $A=\emptyset.$ Analogous to \cite[Definition 4.4]{Saveliev} we can consider the transfer of $F.$

\begin{definition}
The transfer of $F$ with respect to $z\in H_{m(p-1)+n}(X)$ is the homomorphism $F_!^z:H_{\ast}(M^{p-1})\to H_{\ast+n}(X)$ of degree $n$ given by
\[
F_!^z=(F^{\#}\circ D^{-1})\smallfrown z,
\]
where $D:H^k(M^{p-1})\to H_{m(p-1)-k}(M^{p-1})$ is the Poincar\'e duality isomorphism given by $D(x)=x\smallfrown O_{M^{p-1}},$ and $\smallfrown$ is the usual cap-product.
\end{definition}

By \cite[Theorem 4.5]{Saveliev} we have the following;

\begin{theorem}[Representation formula]\label{representation}
\[
I_{FG}(z)=I_{\#}(\Id\otimes G_{\#}\circ F_!^z)\delta_{\#}(O_{M^{p-1}}),
\]
where $O_{M^{p-1}}$ is the fundamental class of $M^{p-1},$ $I:M^{p-1}\times M^{p-1}\to M^{\times}_{p-1}$ is the inclusion and $G_{\#}\circ F_!^z:H(M^{p-1})\to H(M^{p-1})$ is the homomorphism of degree $n.$
\end{theorem}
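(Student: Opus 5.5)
The plan is to reduce the statement to Saveliev's representation formula \cite[Theorem 4.5]{Saveliev}, applied to the pair $F,G:X\to M^{p-1}$ with $A=\emptyset$. The target $N:=M^{p-1}$ is an oriented connected closed manifold of dimension $m(p-1)$, so it satisfies all hypotheses of that theory with empty boundary. Its orientation and fundamental class $O_{M^{p-1}}$ are taken to be the cross product $O_M\times\cdots\times O_M$. The pair $N^{\times}=(N\times N,N\times N\setminus\Delta_N)$ is precisely $M^{\times}_{p-1}$, since $\Delta_N=\Delta_{M^{p-1}}$. Likewise, the coincidence homomorphism $I_{FG}=(F\times G)_{\#}\circ\delta_{\#}$ is exactly Saveliev's $I_{fg}$ with $f=F$, $g=G$ and target $N$. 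The transfer $F_!^z=(F^{\#}\circ D^{-1})\smallfrown z$ is Saveliev's transfer for $F$, with $D$ the Poincaré duality of $N$.

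The first step is to verify these identifications carefully, including degrees. For $z\in H_{m(p-1)+n}(X)$ the transfer $F_!^z$ has degree $n$, hence so does $G_{\#}\circ F_!^z:H(N)\to H(N)$. Then $\Id\otimes G_{\#}\circ F_!^z$ acts on $H(N)\otimes H(N)\cong H(N\times N)$ via Künneth over $\mathbb{Q}$. Next, $\delta_{\#}(O_N)\in H_{m(p-1)}(N\times N)$, and applying $I_{\#}$ lands in $H_{m(p-1)+n}(M^{\times}_{p-1})$. This matches the degree of $I_{FG}(z)$.

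The second step is to quote \cite[Theorem 4.5]{Saveliev} verbatim for $(F,G)$, which yields the displayed formula. If one wants an independent argument, I would follow Saveliev's route. Write $\delta_{\#}(O_N)=\sum_{k,j}\pm\,\bar a_j^k\text{-dual}\ldots$, that is, use the standard expansion of the diagonal class $\delta_{\#}(O_N)=\sum_{k,j}(-1)^{\epsilon}\,D(\bar a_j^k)\times a_j^k$ in terms of dual bases. Then use naturality of the cap product to show that $(F\times G)_{\#}\delta_{\#}(z)$ has the same expansion after replacing $a_j^k$ by $G_{\#}F_!^z(\cdot)$. Finally, push both sides into the pair $M^{\times}_{p-1}$ by $I_{\#}$.

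The main obstacle is not conceptual but lies in sign and orientation conventions. The paper's $\Theta$ differs from that in \cite{GNO} by a sign, and the Künneth identification $H(N\times N)\cong H(N)\otimes H(N)$ must be the one implicit in Saveliev. So I would check that Saveliev's Theorem 4.5 is stated without the trace map $\Theta$, and that it only uses the chosen fundamental class and Künneth. Then the formula transfers unchanged, and the sign subtleties only reappear later when relating $\Lambda_{FG}$ to the product $\Lambda_{f_1f_2}\otimes\cdots\otimes\Lambda_{f_1f_p}$ via Lemma \ref{cap-product}.
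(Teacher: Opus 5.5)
Your proposal is correct and takes the same approach as the paper, which obtains the formula by applying \cite[Theorem 4.5]{Saveliev} to the pair $F,G:X\to M^{p-1}$ with $A=\emptyset$, using that $M^{p-1}$ is a closed oriented manifold and that $(M^{p-1})^{\times}=M^{\times}_{p-1}$. Your optional ``independent argument'' through the dual-basis expansion of the diagonal class is only sketched, and its first display is garbled, but the result does not need it.
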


\section{The Lefschetz homomorphism for $p$-maps}

We have that for $z\in H_{m(p-1)+n}(X)$ the homomorphism $G_{\#}\circ F_!^z:H_{\ast}(M^{p-1})\to H_{\ast+n}(M^{p-1})$ has degree $n.$ Thus from Definition 2.6 we obtain $L(G_{\#}\circ F_!^z)\in H_n(M^{p-1}).$

\begin{definition}
The Lefschetz homomorphism for the $p$-maps $f_1,\cdots,f_p:X\to M$ is the homomorphism $\Lambda_{FG}:H_{\ast}(X)\to H_{\ast-m(p-1)}(M^{p-1})$ of degree $-m(p-1)$ given by;
\[
\Lambda_{FG}(z)=L(G_{\#}\circ F_!^z),
\]
where $z\in H_{m(p-1)+n}(X).$ We will denote $\Lambda_{f_1\cdots f_p}=\Lambda_{FG}.$
\end{definition}

\begin{remark}\label{knilllike}
By Proposition \ref{knill-expression} the Lefschetz homomorphism $\Lambda_{FG}$ has a representation as a Knill-like trace given by;
\[
\Lambda_{FG}(z)=\sum_k(-1)^{k(k+|z|-m(p-1))}\sum_j\bar a_j^k\smallfrown G_{\#}\circ F_!^z(a_j^k),
\]
where $\{a_j^k\mid j=1,\dots,\alpha_k\}$ is a basis for $H_k(M^{p-1})$ with corresponding dual basis $\{\bar a_j^k\mid j=1,\dots,\alpha_k\}$ of $H^k(M^{p-1}),$ for each $k.$
\end{remark}

Let $\tau_{M^{\times}_{p-1}}$ be the Thom class of the fiber bundle pair $M^{\times}_{p-1}\stackrel{{\scriptstyle\pi'}}{{\longrightarrow}}M^{p-1}.$ From \cite[Theorem 6.1]{Saveliev} we have;

\begin{theorem}[Lefschetz-type coincidence theorem for $p$-maps]\label{lefschetz-p}
\[
\varphi_{M^{\times}_{p-1}}\circ I_{FG}=\Lambda_{FG},
\]
where $\varphi_{M^{\times}_{p-1}}:H(M^{\times}_{p-1})\to H(M^{p-1})$ is the Thom isomorphism given by
\[
\varphi_{M^{\times}_{p-1}}(w)={\pi'}_{\#}(\tau_{M^{\times}_{p-1}}\smallfrown w).
\]
Moreover, if $\Lambda_{FG}\neq 0$ then $\Coin(f_1,\cdots,f_p)\neq\emptyset.$
\end{theorem}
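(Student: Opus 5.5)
The statement is Saveliev's coincidence theorem (Theorem \ref{saveliev-thm}) applied to the pair of maps $F,G:X\to M^{p-1}$ with $A=\emptyset$. So the plan is to verify that $(F,G)$ satisfies its hypotheses, identify the objects, and then read off both conclusions.

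\textbf{Hypotheses.} First I check that $N:=M^{p-1}$ is admissible. Since $M$ is closed, connected and oriented of dimension $m$, the product $N$ is a closed, connected, oriented $m(p-1)$-manifold, with $\partial N=\emptyset$. Its fundamental class is $O_{M^{p-1}}$, the cross product of copies of $O_M$. Because $\partial N=\emptyset$, the map $F:(X,\emptyset)\to(N,\partial N)$ is a map of pairs. The condition $\Coin(F,G)\cap A=\emptyset$ is vacuous.

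\textbf{Identifications.} Next I match the pieces of Section 2.4, taken with $M$ replaced by $N$:
\begin{itemize}
\item The pair $N^{\times}=(N\times N,N\times N\setminus\Delta_N)$ is exactly $M^{\times}_{p-1}$, since $\Delta_N=\Delta_{M^{p-1}}$.
\item The Thom class $\tau_N$ of $N^{\times}\stackrel{\pi_1}{\to}N$ is $\tau_{M^{\times}_{p-1}}$, because $\pi'$ is the projection onto the first factor.
\item Hence $\varphi_N=\varphi_{M^{\times}_{p-1}}$.
\item The coincidence homomorphism $I_{FG}=(F\times G)_{\#}\circ\delta_{\#}$ is literally Saveliev's $I_{fg}$ with $f=F$ and $g=G$.
\item The transfer $F^z_!$ of Definition 4.? is Saveliev's transfer for $N$ (Definition 3.5).
\item Therefore $\Lambda_{FG}(z)=L(G_{\#}\circ F^z_!)$ coincides with Saveliev's $\Lambda_{fg}$ for the pair $(F,G)$.
\end{itemize}
The only point needing care is the trace convention. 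The paper changed the sign in $\Theta$, so I check that $L$ as defined (Definition 2.6 with the $(-1)^{k(k+n)}$ factor) agrees with the convention under which Theorem \ref{saveliev-thm} is proved. I expect this to be the main obstacle, if any: the paper uses the same $L$ for $\Lambda_{fg}$ in Section 2, so consistency is inherited, but it should be stated explicitly.

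\textbf{Conclusions.} With these identifications, Theorem \ref{saveliev-thm} gives $\varphi_{M^{\times}_{p-1}}\circ I_{FG}=\Lambda_{FG}$.

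For the "moreover" part, suppose $\Lambda_{FG}\neq0$. Since $\varphi_{M^{\times}_{p-1}}$ is an isomorphism, this gives $I_{FG}\neq0$. If $\Coin(F,G)$ were empty, then $(F\times G)\circ\delta$ would map $X$ into $N\times N\setminus\Delta_N$. It would then factor through the pair $(N\times N\setminus\Delta_N,\,N\times N\setminus\Delta_N)$, whose homology vanishes, forcing $I_{FG}=0$. Hence $\Coin(F,G)\neq\emptyset$.

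Finally, $\Coin(F,G)=\Coin(f_1,\dots,f_p)$ as noted in Section 3, since $f_1(x)=f_i(x)$ for all $i$ means $F(x)=G(x)$. This yields the conclusion.
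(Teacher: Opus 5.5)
Your proposal is correct and follows the paper's route exactly. The paper obtains the theorem by directly citing Saveliev's Theorem 6.1 for the pair $(F,G)$ with target manifold $M^{p-1}$; you make explicit the identification of $(M^{p-1})^{\times}$ with $M^{\times}_{p-1}$, the vanishing of $I_{FG}$ when $\Coin(F,G)=\emptyset$, and the equality $\Coin(F,G)=\Coin(f_1,\cdots,f_p)$. (Two small points: $\tau_{M^{\times}_{p-1}}$ must be the Thom class compatible with the orientation defining $O_{M^{p-1}}$, otherwise the identity holds only up to sign; and $I_{FG}\neq 0$ already follows from $\varphi_{M^{\times}_{p-1}}\circ I_{FG}\neq 0$ without needing $\varphi_{M^{\times}_{p-1}}$ to be an isomorphism.)
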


Given the fiber bundle pair $\xi=M^{\times}\stackrel{{\scriptstyle\pi_1}}{{\longrightarrow}}M$ we can consider the $q$-product, $q\geq 1,$ of that fiber bundle pair by itself; $\xi_q=M^{\times q}\stackrel{{\scriptstyle\pi}}{{\longrightarrow}}M^q,$ where the map $\pi$ is given by $\pi=\underbrace{\pi_1\times\cdots\times\pi_1}_{q\text{-times}}.$

Since $M$ is orientable then choose an orientation for $M,$ with the correspond fundamental class $O_M,$ the Thom class $\tau_M$ for $\xi$ is unique, see \cite{Spanier}. Analogously we have that $\tau_{M^{\times}_q}$ is unique.

\begin{proposition}\label{thom-product}
Let $\tau_{M^q}$ be the Thom class of the fiber bundle pair $M^{\times q}\stackrel{{\scriptstyle\pi}}{{\longrightarrow}}M^q,$ and let $\psi_q:M^{\times q}\to M^{\times}_q$ be the homeomorphism defined as in Proposition \ref{psi-homeo}. Then
\[
\psi_q^{\#}(\tau_{M^{\times}_q})=\tau_{M^q}=\pm\underbrace{\tau_M\times\cdots\times\tau_M}_{q\text{-times}}.
\]
\end{proposition}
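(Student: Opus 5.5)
The plan is to use the characterization of Thom classes by their restrictions to fibers, together with uniqueness once orientations are fixed. Over a base point $b$, a Thom class of an oriented fiber bundle pair with fiber pair $(F_b,\dot F_b)$ (here $(M,M\setminus\{b\})$ or its products) is a class in the total pair whose restriction to every fiber $(\{b\}\times F_b,\{b\}\times\dot F_b)$ is the chosen local orientation generator. Since $M$ is connected and $M^q$ is connected, it is enough to check this on a single fiber; any class restricting to $\pm$ the generator on one fiber is $\pm$ the Thom class.

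\textbf{Step 1: $\psi_q^{\#}(\tau_{M^{\times}_q})$ is the Thom class of $\xi_q$.} The homeomorphism $\psi_q$ of Proposition \ref{psi-homeo} commutes with the projections: $\pi'\circ\psi_q=\pi$, since $\pi'$ takes the first $M^q$-coordinate $(x_1,x_3,\dots)$ and $\pi=\pi_1\times\cdots\times\pi_1$ takes the same coordinates. So $\psi_q$ is an isomorphism of fiber bundle pairs over the identity of $M^q$. Over $b=(b_1,\dots,b_q)$ it restricts on fibers to the identity
\[
\prod_i(M,M\setminus\{b_i\})\to(M^q,M^q\setminus\{b\}),
\]
which follows from the product-of-pairs convention. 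Pulling back the Thom class along such an isomorphism gives a class restricting on each fiber to the orientation generator, with orientation $O_{M^q}$ induced from $O_M$ through the cross product. By uniqueness this equals $\tau_{M^q}$. The only care needed is to fix the orientation of $\xi_q$ as the one induced by $\psi_q$, or equivalently by the product orientation.

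\textbf{Step 2: $\tau_{M^q}=\pm\tau_M\times\cdots\times\tau_M$.} The cross product $\tau_M^{\times q}\in H^{mq}(M^{\times q})$ is defined on the product pair; this is exactly why the product of pairs was introduced. Naturality of the cross product under the fiber inclusions $j_{b}=j_{b_1}\times\cdots\times j_{b_q}$ gives
\[
j_b^{\#}(\tau_M\times\cdots\times\tau_M)=j_{b_1}^{\#}\tau_M\times\cdots\times j_{b_q}^{\#}\tau_M=u_{b_1}\times\cdots\times u_{b_q}.
\]
Here $u_{b_i}$ generates $H^m(M,M\setminus\{b_i\})$. By the relative Künneth theorem over $\mathbb{Q}$, the cross product of generators generates $H^{mq}(M^q,M^q\setminus\{b\})\cong\mathbb{Q}$. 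Hence the restriction is $\pm$ the local orientation class, and by uniqueness $\tau_M^{\times q}=\pm\tau_{M^q}$. The sign depends only on the orientation convention, meaning whether the product orientation is $u_{b_1}\times\cdots\times u_{b_q}$ or differs from it by a reordering. With the natural product orientation it is $+$, but the statement only needs $\pm$.

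\textbf{Main obstacle.} The delicate point is rigor in Step 2. One must check that the relative cross product of pairs is compatible with the union-of-$Y_i$ description of the product pair, and that an excision applies so that the Künneth isomorphism holds for the pairs $(M,M\setminus\{b_i\})$. If needed, one can replace $M\setminus\{b_i\}$ by the complement of a small open ball and use excision, so that everything reduces to $(D^m,S^{m-1})^q\simeq(D^{mq},S^{mq-1})$. There the claim that the cross product of generators is a generator is standard.
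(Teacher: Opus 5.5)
Your proposal is correct and follows essentially the same route as the paper. Since $\pi'\circ\psi_q=\pi$, the map $\psi_q$ is an isomorphism of fiber bundle pairs over the identity of $M^q$, so it pulls the Thom class back to the Thom class; and $\tau_M\times\cdots\times\tau_M$ restricts on each fiber to the product of the local orientation generators, hence equals $\pm\tau_{M^q}$ by the fiberwise characterization of Thom classes. You simply make explicit what the paper leaves implicit: naturality of the cross product under the fiber inclusions, excisiveness of the open couples, and the integral K\"unneth computation on $(D^m,S^{m-1})^q$, which gives $\pm 1$ rather than merely a nonzero rational multiple.
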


\begin{proof}
The first equality follows from fact that $\psi_q$ is an isomorphism of orientable fiber bundles and satisfies $\pi'\circ\psi_q=\pi.$ The second equality follows from the characterization of the Thom class and from the product orientation. In fact, the restriction of $\tau_M\times\cdots\times\tau_M$ to every fiber is the product of the orientation generators, and therefore it is the Thom class of the product bundle up to the choice of orientation.
\end{proof}

We define $\overline{\varphi}:H(M^{\times})\otimes\cdots\otimes H(M^{\times})\to H(M)\otimes\cdots\otimes H(M)$, with $(p-1)$ factors, by;
\[
\overline{\varphi}=\varphi_M\otimes\cdots\otimes\varphi_M.
\]
Note that $\overline{\varphi}$ has an inverse given by $\overline{\varphi}^{-1}=\varphi_M^{-1}\otimes\cdots\otimes\varphi_M^{-1}.$

Let $r=p-1.$ For a homogeneous element $x=x_1\otimes\cdots\otimes x_r\in H(M^{\times})^{\otimes r}$, where $|x_i|=s_i,$ we define
\[
\kappa_M(x)=(-1)^{A_r(x)}x,
\qquad
A_r(x)=\sum_{j=2}^{r}m\left(\sum_{i=1}^{j-1}(s_i-m)\right).
\]
Analogously, for a homogeneous element $z=z_1\otimes\cdots\otimes z_r\in H(X)^{\otimes r}$, we define
\[
\kappa_X(z)=(-1)^{A_r(z)}z,
\qquad
A_r(z)=\sum_{j=2}^{r}m\left(\sum_{i=1}^{j-1}(|z_i|-m)\right).
\]

\begin{proposition}\label{thom-kunneth}
The following relation holds up to signal:
\[
\varphi_{M^r}\circ\eta
=\pm\eta'\circ\overline{\varphi}\circ\kappa_M,
\]
where $\eta'$ is also a K$\ddot{u}$nneth isomorphism. Moreover,
\[
\varphi_{M^r}=\varphi_{M^{\times}_r}\circ\psi_{r\#}.
\]
\end{proposition}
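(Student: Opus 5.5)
Both relations follow from the definitions of the Thom isomorphisms, the naturality of cap products, and Lemma \ref{cap-product}. Here $\varphi_{M^r}:H(M^{\times r})\to H(M^r)$ denotes the Thom isomorphism $\varphi_{M^r}(w)=\pi_{\#}(\tau_{M^r}\smallfrown w)$ of the product bundle $\xi_r(M)$, with $\pi=\pi_1\times\cdots\times\pi_1$. I would prove the second (exact) equality first, since it is purely formal, and then the first one, where the sign is the only real issue.

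\emph{The second equality.} Let $w\in H(M^{\times r})$. By Proposition \ref{thom-product} we have $\tau_{M^r}=\psi_r^{\#}(\tau_{M^{\times}_r})$, and by naturality of the cap product
\[
\psi_{r\#}(\psi_r^{\#}(\tau_{M^{\times}_r})\smallfrown w)=\tau_{M^{\times}_r}\smallfrown\psi_{r\#}(w).
\]
Since $\pi'\circ\psi_r=\pi$, we get
\[
\varphi_{M^r}(w)=\pi_{\#}(\tau_{M^r}\smallfrown w)=\pi'_{\#}\psi_{r\#}(\psi_r^{\#}\tau_{M^{\times}_r}\smallfrown w)=\pi'_{\#}(\tau_{M^{\times}_r}\smallfrown\psi_{r\#}w)=\varphi_{M^{\times}_r}(\psi_{r\#}w).
\]
This gives $\varphi_{M^r}=\varphi_{M^{\times}_r}\circ\psi_{r\#}$.

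\emph{The first relation.} By linearity it suffices to check it on a homogeneous tensor $x=x_1\otimes\cdots\otimes x_r$ with $x_i\in H_{s_i}(M^{\times})$. The Künneth map gives $\eta(x)=x_1\times\cdots\times x_r$. By Proposition \ref{thom-product}, $\tau_{M^r}=\pm\tau_M\times\cdots\times\tau_M$. Applying Lemma \ref{cap-product} with $r_i=m$ yields
\[
(\tau_M\times\cdots\times\tau_M)\smallfrown(x_1\times\cdots\times x_r)=(-1)^{A_r(x)}(\tau_M\smallfrown x_1)\times\cdots\times(\tau_M\smallfrown x_r),
\]
and the exponent $A_r$ of the lemma is exactly the exponent used in defining $\kappa_M$. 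The cross product is natural, so $(\pi_1\times\cdots\times\pi_1)_{\#}(y_1\times\cdots\times y_r)=\pi_{1\#}y_1\times\cdots\times\pi_{1\#}y_r$. Hence
\[
\varphi_{M^r}\eta(x)=\pm(-1)^{A_r(x)}\varphi_M(x_1)\times\cdots\times\varphi_M(x_r)=\pm\eta'\overline{\varphi}\kappa_M(x),
\]
where $\eta'$ is the Künneth isomorphism $H(M)^{\otimes r}\to H(M^r)$.

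\emph{Main obstacle: the sign.} The remaining sign $\pm$ comes only from Proposition \ref{thom-product}, namely from comparing the product orientation with the orientation that determines $\tau_{M^r}$. This sign is independent of $x$, so the relation holds up to a global sign, as stated. No Koszul signs arise from the Künneth maps, because $\eta$ and $\eta'$ are both given by cross products, and $\overline{\varphi}$ is understood to act factorwise with no additional sign. The only care needed is to check that the indexing of $A_r$ in $\kappa_M$ matches that of Lemma \ref{cap-product}, with $r_j=m$ and $s_i=|x_i|$; it does.
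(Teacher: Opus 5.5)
Your proposal is correct and is essentially the paper's own proof. The second identity comes from $\tau_{M^r}=\psi_r^{\#}(\tau_{M^{\times}_r})$, naturality of the cap product and $\pi'\circ\psi_r=\pi$; the first comes from $\tau_{M^r}=\pm\tau_M\times\cdots\times\tau_M$, Lemma~\ref{cap-product} with all $r_i=m$, and naturality of the cross product under $\pi_{\#}$. Your caveat that $\overline{\varphi}$ acts factorwise without Koszul signs is exactly the convention the paper uses tacitly (with Koszul signs the sign discrepancy would depend on the degrees $|x_i|$ when $m$ is odd, and $\kappa_M$ would become superfluous up to a global sign).
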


\begin{proof}
The map $\psi_r$ satisfies $\pi'\circ\psi_r=\pi_1\times\cdots\times\pi_1=\pi.$ From Proposition \ref{thom-product} and by naturality of the cap product we have
\[
\begin{array}{lll}
\varphi_{M^r}(w)
&=&\pi_{\#}(\tau_{M^r}\smallfrown w)\\
&=&{\pi'}_{\#}(\psi_{r\#}(\psi_r^{\#}(\tau_{M^{\times}_r})\smallfrown w))\\
&=&{\pi'}_{\#}(\tau_{M^{\times}_r}\smallfrown\psi_{r\#}(w))\\
&=&\varphi_{M^{\times}_r}(\psi_{r\#}(w)).
\end{array}
\]
Therefore $\varphi_{M^r}=\varphi_{M^{\times}_r}\circ\psi_{r\#}.$ Let $x=x_1\otimes\cdots\otimes x_r\in H(M^{\times})^{\otimes r}$ be homogeneous. We have
\[
\begin{array}{lll}
\varphi_{M^r}(\eta(x))
&=&\varphi_{M^r}(x_1\times\cdots\times x_r)\\
&=&\pm\pi_{\#}((\tau_M\times\cdots\times\tau_M)\smallfrown(x_1\times\cdots\times x_r))\\
&=&\pm(-1)^{A_r(x)}\pi_{\#}((\tau_M\smallfrown x_1)\times\cdots\times(\tau_M\smallfrown x_r))\\
&=&\pm(-1)^{A_r(x)}(\varphi_M(x_1)\times\cdots\times\varphi_M(x_r))\\
&=&\pm\eta'\circ\overline{\varphi}\circ\kappa_M(x),
\end{array}
\]
where $A_r(x)$ is given by Lemma \ref{cap-product}. Therefore $\varphi_{M^r}\circ\eta=\pm\eta'\circ\overline{\varphi}\circ\kappa_M.$
\end{proof}

\begin{theorem}\label{mainfactor}
We have;
\[
\Lambda_{f_1\cdots f_p}=\Lambda_{FG}
=\pm\eta'\circ(\Lambda_{f_1f_2}\otimes\cdots\otimes\Lambda_{f_1f_p})\circ\kappa_X\circ\Delta_X^{(p-1)}.
\]
Equivalently, if
\[
\Delta_X^{(p-1)}(z)=\sum_{\nu}z_{\nu,1}\otimes\cdots\otimes z_{\nu,p-1}
\]
with homogeneous terms, then
\[
\Lambda_{f_1\cdots f_p}(z)
=\pm\sum_{\nu}(-1)^{A_{p-1}(z_\nu)}
\Lambda_{f_1f_2}(z_{\nu,1})\times\cdots\times\Lambda_{f_1f_p}(z_{\nu,p-1}).
\]
\end{theorem}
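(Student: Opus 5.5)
The plan is to chain the earlier results so that the Thom isomorphism for $M^{\times}_{p-1}$ is transported to a tensor product of Thom isomorphisms for $M^{\times}$, and then to use Saveliev's theorem factorwise.

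First, by Theorem \ref{lefschetz-p} we have $\Lambda_{FG}=\varphi_{M^{\times}_{p-1}}\circ I_{FG}$. Substituting the factorization of Theorem \ref{factor-I} gives
\[
\Lambda_{FG}=\varphi_{M^{\times}_{p-1}}\circ\psi_{\#}\circ\eta\circ(I_{f_1f_2}\otimes\cdots\otimes I_{f_1f_p})\circ\Delta_X^{(p-1)}.
\]
By the second part of Proposition \ref{thom-kunneth}, with $r=p-1$ and $\psi=\psi_r$, we get $\varphi_{M^{\times}_{r}}\circ\psi_{r\#}=\varphi_{M^r}$. By the first part, $\varphi_{M^r}\circ\eta=\pm\eta'\circ\overline{\varphi}\circ\kappa_M$. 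Hence
\[
\Lambda_{FG}=\pm\eta'\circ(\varphi_M\otimes\cdots\otimes\varphi_M)\circ\kappa_M\circ(I_{f_1f_2}\otimes\cdots\otimes I_{f_1f_p})\circ\Delta_X^{(p-1)}.
\]

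The next step is to move $\kappa_M$ past the tensor product of coincidence homomorphisms. Each $I_{f_1f_i}=(f_1\times f_i)_\#\circ\delta_\#$ has degree $0$. So for a homogeneous $z_1\otimes\cdots\otimes z_r$, the element $I_{f_1f_2}(z_1)\otimes\cdots\otimes I_{f_1f_p}(z_r)$ has component degrees $s_i=|z_i|$. The exponent $A_r$ used in $\kappa_M$ therefore coincides with the one used in $\kappa_X$, which gives
\[
\kappa_M\circ(I_{f_1f_2}\otimes\cdots\otimes I_{f_1f_p})=(I_{f_1f_2}\otimes\cdots\otimes I_{f_1f_p})\circ\kappa_X.
\]
A further point needs checking here. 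Passing $\varphi_M\otimes\cdots\otimes\varphi_M$ through the tensor product may introduce Koszul signs, because $\varphi_M$ has degree $-m$. Using the convention $(f\otimes g)(a\otimes b)=f(a)\otimes g(b)$ with no Koszul sign, as in the definition of $\overline{\varphi}$, no sign arises. Otherwise any such sign depends only on the degrees $|z_i|$ and $m$, and it is absorbed into the global $\pm$ of the statement.

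Finally, $(\varphi_M\otimes\cdots\otimes\varphi_M)\circ(I_{f_1f_2}\otimes\cdots\otimes I_{f_1f_p})=(\varphi_M\circ I_{f_1f_2})\otimes\cdots\otimes(\varphi_M\circ I_{f_1f_p})$. By Theorem \ref{saveliev-thm}, applied with $A=\emptyset$ to each pair $(f_1,f_i)$, this equals $\Lambda_{f_1f_2}\otimes\cdots\otimes\Lambda_{f_1f_p}$, which yields the first formula.

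For the equivalent form, write $\Delta_X^{(p-1)}(z)=\sum_\nu z_{\nu,1}\otimes\cdots\otimes z_{\nu,p-1}$ with homogeneous terms. Applying $\kappa_X$ multiplies each term by $(-1)^{A_{p-1}(z_\nu)}$. Applying $\eta'$, the Künneth cross product, turns the resulting tensor into the cross product $\Lambda_{f_1f_2}(z_{\nu,1})\times\cdots\times\Lambda_{f_1f_p}(z_{\nu,p-1})$.

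The main obstacle is the sign bookkeeping. The $\pm$ inherited from Proposition \ref{thom-product} depends only on orientation conventions and is a single global sign. The interchange of $\kappa_M$ with the degree-$0$ maps is harmless. What needs a careful check is that no term-dependent sign is hidden in the tensor conventions, so that the overall $\pm$ really is uniform in $z$.
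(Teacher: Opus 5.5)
Your proposal is correct and follows the paper's proof step for step: Theorem~\ref{lefschetz-p}, the factorization of Theorem~\ref{factor-I}, both parts of Proposition~\ref{thom-kunneth}, commuting $\kappa_M$ past the degree-zero maps $I_{f_1f_j}$ to obtain $\kappa_X$, and then Theorem~\ref{saveliev-thm} factorwise. One small correction: your fallback remark that a Koszul sign depending on the $|z_i|$ could be absorbed into the global $\pm$ is not valid, since such a sign varies from term to term; this does not affect your proof, because the paper applies $\overline{\varphi}$ without Koszul signs, exactly as in the last line of the proof of Proposition~\ref{thom-kunneth}.
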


\begin{proof}
By Theorems \ref{factor-I}, \ref{lefschetz-p} and Proposition \ref{thom-kunneth} we obtain;
\[
\begin{array}{lll}
\Lambda_{f_1\cdots f_p}=\Lambda_{FG}
&=&\varphi_{M^{\times}_{p-1}}\circ I_{FG}\\
&=&\varphi_{M^{\times}_{p-1}}\circ\psi_{\#}\circ\eta\circ(I_{f_1f_2}\otimes\cdots\otimes I_{f_1f_p})\circ\Delta_X^{(p-1)}\\
&=&\varphi_{M^{p-1}}\circ\eta\circ(I_{f_1f_2}\otimes\cdots\otimes I_{f_1f_p})\circ\Delta_X^{(p-1)}\\
&=&\pm\eta'\circ\overline{\varphi}\circ\kappa_M\circ(I_{f_1f_2}\otimes\cdots\otimes I_{f_1f_p})\circ\Delta_X^{(p-1)}\\
&=&\pm\eta'\circ(\Lambda_{f_1f_2}\otimes\cdots\otimes\Lambda_{f_1f_p})\circ\kappa_X\circ\Delta_X^{(p-1)}.
\end{array}
\]
The last equality follows because each $I_{f_1f_j}$ has degree $0$ and $\varphi_M\circ I_{f_1f_j}=\Lambda_{f_1f_j}.$
\end{proof}

\begin{remark}
Similarly \cite{Saveliev}, is possible to define the coincidence and Lefschetz homomophisms for $p$-maps in the case where $M$ have boundary to study coincidences of maps $f_1,\cdots,f_p$ in the following situation; $f_1:(X,A)\to(M,\partial M)$ and $f_2,\cdots,f_p:X\to M.$
\end{remark}

\section{Examples}

(I) Let $r=p-1,$ $X=M^r,$ fix a point $a\in M$ and consider the maps
\[
f_1(x_1,\cdots,x_r)=a,\qquad f_j(x_1,\cdots,x_r)=x_{j-1},\quad j=2,\cdots,p.
\]
Then $F:X\to M^r$ is the constant map with value $(a,\cdots,a)$ and $G:X\to M^r$ is the identity map. Thus
\[
\Coin(f_1,\cdots,f_p)=\{(a,\cdots,a)\}.
\]
Let $O_X\in H_{mr}(X)$ be the fundamental class. Since $F$ is constant, the transfer $F_!^{O_X}:H(X)\to H(X)$ is zero in every degree except $mr,$ and on the fundamental class it is the identity up to the choice of orientation. Therefore
\[
\Lambda_{f_1\cdots f_p}(O_X)=\pm[pt]\neq 0\in H_0(M^r).
\]
This gives a non trivial Lefschetz homomorphism for every $p\geq 2.$

\

(II) Let $f,g:X\to M$ be maps where $X=M\times S^1$ and $M$ is an orientable connected $m$-dimensional suitable manifold. For the definition of suitable, see \cite[Section 4]{Wong}. We define $\psi_{fg}:X\to M$ by
\[
\psi_{fg}(x)=g(x).[f(x)]^{-1}.
\]
We also define $\overline f,\overline g:M\to M$ by $\overline f(x)=f(x,1)$ and $\overline g(x)=g(x,1),$ where $1$ denotes the point $(1,0)\in S^1.$ If $\overline{O_M}$ is the dual of the fundamental class $O_M$ of $M$ then by \cite[Theorem 7.1]{Saveliev} we obtain;
\[
\Lambda_{fg}(z)=\langle\overline{O_M},{\psi_{fg}}_{\#}(z)\rangle
\]
for $z\in H_m(X).$ Let $j:H_m(M)\to H_m(M\times S^1)$ be given by $j(w)=w\otimes 1,$ where we are using the K$\ddot{u}$nneth isomorphism to identify
\[
H_m(M\times S^1)\approx\bigoplus_{k+l=m}H_k(M)\otimes H_l(S^1),
\]
and define $\overline{\psi_{fg}}={\psi_{fg}}_{\#}\circ j.$

From \cite[Theorem 3]{Wong} we have $\deg(\overline{\psi_{fg}})=L(\overline f,\overline g).$ Thus, considering $z=w\otimes 1,$ where $w\in H_m(M)$ we have
\[
\begin{array}{lll}
\Lambda_{fg}(z)
&=&\langle\overline{O_M},{\psi_{fg}}_{\#}(w\otimes 1)\rangle\\
&=&\langle\overline{O_M},\overline{\psi_{fg}}(w)\rangle\\
&=&\langle\overline{O_M},w\rangle L(\overline f,\overline g).
\end{array}
\tag{7}
\]
Therefore, if $w=O_M$ we obtain;
\[
\Lambda_{fg}(O_M\otimes 1)=L(\overline f,\overline g).
\tag{8}
\]

Equation (8) gives a calculation for two maps. For $p$-maps $f_1,\cdots,f_p:M\times S^1\to M$ with $p\geq 3,$ the element $O_M\otimes 1$ has degree $m<m(p-1),$ and therefore
\[
\Lambda_{f_1\cdots f_p}(O_M\otimes 1)=0.
\]
Thus the non triviality of the pairwise numbers $L(\overline{f_1},\overline{f_j})$ does not by itself imply the non triviality of $\Lambda_{f_1\cdots f_p}$ on the same element. The coproduct in Theorem \ref{mainfactor} has to be taken into account.

\end{document}